\newcommand{\ca}{\mathcal}
\newtheorem{defi}{Definition}[section]
\newtheorem{theo}[defi]{Theorem}
\newtheorem{prop}[defi]{Proposition}
\newtheorem{lem}[defi]{Lemma}
\newtheorem{con}[defi]{Conjecture}
\theoremstyle{remark}
\newcommand{\ENDproof}{\hfill $\square$\medskip\par}
\title{Edge-connectivity and pairwise disjoint perfect matchings in regular graphs}
\author{Yulai Ma$^1$\thanks{Supported by Sino-German (CSC-DAAD) Postdoc Scholarship Program 2021 (57575640)} \footnotemark[3], Davide Mattiolo$^2$\thanks{Supported by a Postdoctoral Fellowship of the Research Foundation Flanders (FWO), project number 1268323N}, Eckhard Steffen$^1$, Isaak H.~Wolf$^1$\thanks{Funded by Deutsche Forschungsgemeinschaft (DFG, German Research Foundation) - 445863039} \\
	\footnotesize
	$^1$ Department of Mathematics, Paderborn University, Warburger Str.\ 100, 33098 Paderborn,
	Germany.
	\\
	\footnotesize
	$^2$ Department of Computer Science, KU Leuven Kulak, 8500 Kortrijk, Belgium.
	\\ \footnotesize yulai.ma@upb.de, davide.mattiolo@kuleuven.be, es@upb.de, isaak.wolf@upb.de}
\date{}
\begin{document}

\maketitle

\begin{abstract} 
For $0 \leq t \leq r$ let $m(t,r)$ be the maximum number $s$ such that every $t$-edge-connected $r$-graph has $s$ pairwise disjoint perfect matchings. There are only a few values of $m(t,r)$ known, for instance $m(3,3)=m(4,r)=1$, and $m(t,r) \leq r-2$ for all $t \not = 5$, and  $m(t,r) \leq r-3$ if $r$ is even. We prove that $m(2l,r) \leq 3l - 6$ for every $l \geq 3$ and $r \geq 2 l$. 
\end{abstract}

{\bf Keywords:} perfect matchings, regular graphs, factors, $r$-graphs, edge-colorings, class 2 graphs.

\section{Introduction and motivation}
This paper studies regular graphs which may have parallel edges but no loops. All graphs and all multisets considered in this paper are finite.
The vertex set of a graph $G$ is denoted by $V(G)$ and its edge set by $E(G)$. The number of parallel edges connecting two vertices $u,v$ of $G$ is denoted by $\mu_G(u,v)$ and $\mu(G)=\max\{\mu_G(u,v)\colon u,v\in V(G)\}$. For any two disjoint subsets $ X $ and $ Y $ of $ V(G) $  we denote by $E_G(X,Y)$ the set of edges with one end in $X$ and the other end in $Y$. If $Y= X^c$ we denote $E_G(X,Y)$ by $\partial_G(X).$ The graph induced by $ X $ is denoted by $ G[X] $.
The \emph{edge-connectivity} of $G$, denoted by  $\lambda(G)$, is  the maximum number $k$ such that
$|\partial_G(X)|\ge k$ for every non-empty  $X \subset V(G)$. Similarly, the \emph{odd edge-connectivity} $\lambda_o(G)$ is defined as the maximum number $t$ such  that
$|\partial_G(Y)|\ge t$ for every  $Y \subseteq V(G)$ of odd cardinality.
Clearly, $\lambda(G) \leq \lambda_o(G)$ if $ G $ is of even order. 
An $r$-regular graph $G$ is an $r$-\emph{graph} if $\lambda_o(G) = r$.
Note that an $r$-graph can have small edge-cuts which separate sets of
even cardinality.  
 
An $r$-graph is \emph{class 1}, if its edge set can be partitioned into
$r$ perfect matchings and it is \emph{class 2} otherwise. Consequently, an $r$-graph is
class 2 if and only if it has at most $r-2$ pairwise disjoint perfect matchings.  There are many hard problems with regard to perfect matchings in $r$-graphs. For instance Seymour's exact conjecture \cite{seymour1979multi}
states that every planar $r$-graph is class 1. 

If $G$ has a set of $k$ pairwise disjoint perfect matchings we say that it has a
$k$-PDPM. For $0 \leq t \leq r$ let $m(t,r)$ be the maximum number $s$ such that every $t$-edge-connected $r$-graph has an $s$-PDPM. 
In addition to its exact determination, lower and upper bounds for this parameter are of great interest. 
The function $m(t, r)$ is monotone increasing in $t$, in other words $m(t, r)\leq m(t', r )$ for $t\leq t' $. In particular we have that
$m(t,r) \leq m(r,r)$ for all $t \in \{2, \dots,r\}$. 

For all $r \geq 3$ and $r \not = 5$, class 2 $r$-edge-connected $r$-graphs 
are known, \cite{MMSW_pdpm, Meredith}. Thus,
$m(r,r) \leq r-2$ for these $r$. Surprisingly, no such graphs seem to
be known for $r=5$. 
The question whether $m(5,5)=5$ is raised in \cite{MMSW_pdpm} where also some
consequences (if true) are discussed.  
Thomassen \cite{THOMASSEN2020343} asked whether $m(r,r) = r-2$. This is not
true if $r$ is even. 
In \cite{MMSW_pdpm, Mattiolo2022HighlyER} it is proved that
$m(r,r) \leq r-3$ if $r$ is even and $m(r-1,r) \leq r-3$ if $r$ is odd. 

There is also not much known with regard to lower bounds for $m(t,r)$.
Trivially we have $m(t,r) \geq 1$. It is an open question 
which $r$-graphs have two disjoint perfect matchings.  
Rizzi \cite{rizzi1999indecomposable} constructed
(non-planar) $r$-graphs where any two perfect matchings intersect.
These $r$-graphs are called poorly matchable and so far all known
poorly matchable $r$-graphs have a 4-edge-cut. 
We do not know any 5-edge-connected poorly matchable $r$-graph. It might be 
that high edge-connectivity (instead of odd edge-connectivity) enforces the existence of pairwise disjoint perfect matchings in $r$-graphs. 
Thomassen \cite{THOMASSEN2020343} conjectured this to be true. Precisely, he conjectured that there is an integer $r_0$
such that there is no poorly matchable $r$-graph for $r \geq r_0$.
Seymour's exact conjecture implies that there is 
no poorly matchable planar $r$-graph. However, even this seemingly weaker
statement is unproved so far.  
Up to now, we have $m(3,3)=1$ for cubic graphs and $m(4,r)=1$ by Rizzi's result. 

In this paper, we improve upper bounds for $m(t,r)$ which only depend on 
the edge-connectivity  parameter. 
Our main result is that $m(2l,r) \leq 3l - 6$ for every $l \geq 3$ and $r \geq 2l$.

\section{Basic definitions and results}

In this paper, we make extensive use of the Petersen graph, denoted by $P$, and of the properties of its perfect matchings. Rizzi \cite{rizzi1999indecomposable} observed that every two distinct 1-factors of the Petersen graph have precisely one edge in common, and  proved that there is a one-to-one
correspondence between edges and pairs of distinct $ 1 $-factors in the Petersen graph.   Then we have the following proposition immediately.

\begin{prop}\label{Prop-Petersen-six-matching}
	The Petersen graph has exactly six perfect matchings, and each edge is contained in exactly two of them.
\end{prop}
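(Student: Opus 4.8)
The plan is to derive everything from the two facts of Rizzi quoted just above: that any two distinct $1$-factors of $P$ meet in exactly one edge, and that the assignment $\{M,M'\}\mapsto M\cap M'$ is a one-to-one correspondence between the set of unordered pairs of distinct perfect matchings of $P$ and the edge set $E(P)$. First I would record that $P$ is cubic on $10$ vertices, hence $|E(P)|=15$. If $n$ denotes the number of perfect matchings of $P$, then the correspondence above forces $\binom{n}{2}=15$, i.e.\ $n(n-1)=30$, so $n=6$. This settles the first claim.

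For the second claim I would show that each edge $e\in E(P)$ lies in exactly two of these six matchings. On the one hand, surjectivity of the correspondence yields a pair $\{M,M'\}$ of distinct perfect matchings with $M\cap M'=\{e\}$, so $e$ lies in at least two perfect matchings. On the other hand, suppose $e$ belonged to three distinct perfect matchings $M_1,M_2,M_3$. Since any two distinct $1$-factors of $P$ share exactly one edge, we get $M_i\cap M_j=\{e\}$ for all $i\neq j$; thus the three distinct pairs $\{M_1,M_2\}$, $\{M_1,M_3\}$, $\{M_2,M_3\}$ would all be mapped to $e$, contradicting injectivity of the correspondence. Hence $e$ lies in exactly two perfect matchings.

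A second, equivalent way to finish once $n=6$ is known is a double count of incidences between edges and perfect matchings: there are $6\cdot 5=30$ such incidences and $15$ edges, and the injectivity argument above shows no edge occurs in more than two matchings, so every edge occurs in exactly two. Since the two facts of Rizzi are taken as given, there is essentially no real obstacle here; the only point that needs care is the bookkeeping ensuring that no edge lies in a third perfect matching, which is precisely where injectivity of the pair-to-edge correspondence enters.
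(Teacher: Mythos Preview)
Your proof is correct and is precisely the argument the paper has in mind: it states that the proposition follows ``immediately'' from Rizzi's two observations, and you have simply written out the counting $\binom{n}{2}=|E(P)|=15$ and the injectivity/surjectivity reasoning that make this explicit. There is nothing to add.
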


We fix a drawing of $P$ as in Figure \ref{fig:Pet_plus_pm} left. With reference to Figure \ref{fig:Pet_plus_pm}, we define $M_0$ to be the perfect matching consisting of all edges $u_iv_i$, for $i\in\{1,\dots,5\}$. Moreover, for $i\in\{1,\dots,5\}$, by Proposition \ref{Prop-Petersen-six-matching} we let $M_i$ be the only other perfect matching of $P$ different from $M_0$ and containing $u_iv_i$, see Figure \ref{fig:Pet_plus_pm}.

Let $G$ be a graph and let $F_1,\dots,F_t\subseteq E(G)$. The graph $G+F_1+\ldots+F_t$ is the graph obtained from $G$ by adding a copy of every edge in $F_j$ for every $j\in\{1,\dots,t\}$. 
For a multiset $\ca N$ of perfect matchings of $G$ and an edge $ e\in E(G) $, we say that $\ca N$ \emph{contains} (\emph{avoids}, respectively) $e$  if $e\in \bigcup_{N\in \ca N} N$  ($e\notin \bigcup_{N\in \ca N} N$, respectively).

Let $\ca M$ be a multiset of perfect matchings of $P$. We denote by $n_{\ca M}(i)$ the number of copies of $M_i$ appearing in $\ca M$.  
Define $P^{\ca M}$ to be the graph $P+\sum_{F\in\ca M}F$.
Now, let $\ca N$ be a multiset of perfect matchings of $P^{\ca M}$. 
Note that each perfect matching  of $ \mathcal{N} $ can be interpreted as a perfect matching of $P$ by  caring only about the end-vertices of each edge. Then the multiset  $ \ca N $ can be interpreted as a multiset of perfect matchings of $ P $, which is denoted by $\ca N_P $. Note that $ |\mathcal{N}_P | = |\mathcal{N}| $.

\begin{figure}
	\centering
	\includegraphics[scale=0.7]{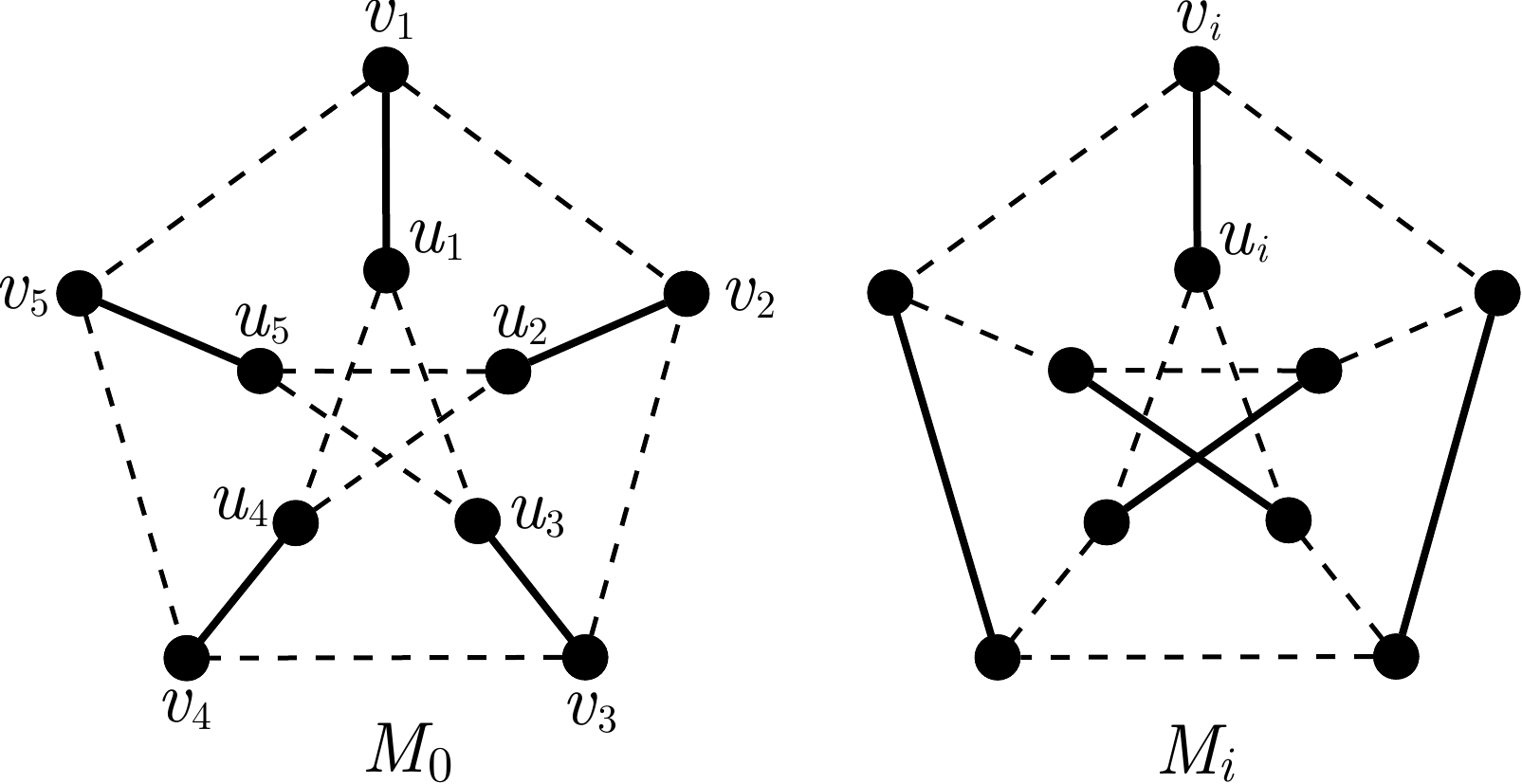}
	\caption{The Petersen graph $P$, and its perfect matchings $M_0$ and $M_i$.}\label{fig:Pet_plus_pm}
\end{figure}

\begin{lem}\label{Lemma-no-cover-all-edges}
	Let $\ca M$ be a multiset of perfect matchings of $P$. Let $\ca N$ be a set of pairwise disjoint perfect matchings of $P^{\ca M}$. There is at most one $i\in\{0,\dots,5\}$ such that $n_{\ca N_P}(i)>n_{\ca M}(i)$.

	In particular, there is no triple of different vertices $u,v,w$ in $P^{\ca M}$, with $w$ adjacent to both $v$ and $u$, such that $\ca N$ contains all edges of $E_{P^{\ca M}}(\{u,v\},\{w\})$.
\end{lem}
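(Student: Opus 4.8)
The plan is to analyze, for a set $\ca N$ of pairwise disjoint perfect matchings of $P^{\ca M}$, how many copies of each $M_i$ can appear in $\ca N_P$ relative to $\ca M$. The key observation is an edge-counting / double-counting argument on the original Petersen graph $P$. Fix any edge $e \in E(P)$. By Proposition \ref{Prop-Petersen-six-matching}, $e$ lies in exactly two of the six perfect matchings $M_0,\dots,M_5$; write $e \in M_a \cap M_b$. In $P^{\ca M}$, the multiplicity of $e$ is $1 + n_{\ca M}(a) + n_{\ca M}(b)$ (the original copy plus one extra copy for each added matching through $e$). Since $\ca N$ is a set of \emph{pairwise disjoint} perfect matchings of $P^{\ca M}$, the edges of $\ca N$ lying over $e$ use distinct parallel copies, so $\ca N$ can contain $e$ at most $1 + n_{\ca M}(a) + n_{\ca M}(b)$ times; equivalently $n_{\ca N_P}(a) + n_{\ca N_P}(b) \le 1 + n_{\ca M}(a) + n_{\ca M}(b)$, because every matching in $\ca N_P$ containing $e$ is one of $M_a, M_b$.

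First I would set $d(i) = n_{\ca N_P}(i) - n_{\ca M}(i)$ for $i \in \{0,\dots,5\}$. The inequality above says $d(a) + d(b) \le 1$ for every edge $e \in M_a \cap M_b$, i.e. for every one of the $15$ pairs $\{a,b\}$ (since by Rizzi's correspondence the pairs of distinct $1$-factors are in bijection with the $15$ edges of $P$, every pair $\{a,b\}$ arises from some edge). So for \emph{all} pairs $\{a,b\}\subseteq\{0,\dots,5\}$ we have $d(a)+d(b)\le 1$. Now suppose for contradiction that there are two indices $i\ne j$ with $d(i) \ge 1$ and $d(j) \ge 1$; then $d(i)+d(j) \ge 2 > 1$, contradicting the pairwise inequality applied to the pair $\{i,j\}$. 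Hence at most one index $i$ has $d(i) \ge 1$, which is exactly the first assertion: at most one $i$ with $n_{\ca N_P}(i) > n_{\ca M}(i)$.

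For the "in particular" part, suppose $u,v,w$ are distinct vertices of $P^{\ca M}$ with $w$ adjacent to both $u$ and $v$, and suppose $\ca N$ contains every edge of $E_{P^{\ca M}}(\{u,v\},\{w\})$. In $P$, the edges $uw$ and $vw$ are two distinct edges sharing the vertex $w$; by Proposition \ref{Prop-Petersen-six-matching} each lies in exactly two of the $M_i$, and since they meet at $w$ no $M_i$ contains both. So the edge $uw$ forces $\ca N$ to contain copies of (say) $M_{a_1}, M_{a_2}$ \emph{beyond} what $\ca M$ supplies — more precisely, containing all $1 + n_{\ca M}(a_1) + n_{\ca M}(a_2)$ copies of $uw$ forces $d(a_1)+d(a_2) = 1$ with both $d(a_1),d(a_2)$ integers, hence at least one of $d(a_1),d(a_2)$ is $\ge 1$; wait — more carefully: containing \emph{all} copies of $uw$ gives $n_{\ca N_P}(a_1)+n_{\ca N_P}(a_2) = 1 + n_{\ca M}(a_1)+n_{\ca M}(a_2)$, but also each $n_{\ca N_P}(a_k) \le$ the number of copies of the corresponding edges, and in particular $\ca N$ must still be realizable. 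The cleanest route: containing all copies of $uw$ means the sum $n_{\ca N_P}(a_1)+n_{\ca N_P}(a_2)$ exceeds $n_{\ca M}(a_1)+n_{\ca M}(a_2)$, so at least one of $a_1,a_2$ has $d\ge1$; similarly $vw \in M_{b_1}\cap M_{b_2}$ forces at least one of $b_1,b_2$ to have $d\ge 1$. Since $\{a_1,a_2\}\cap\{b_1,b_2\}=\emptyset$ (no $M_i$ contains two edges at $w$), these produce two distinct indices with $d\ge 1$, contradicting the first part. I expect the main obstacle to be the bookkeeping in this last step — keeping straight the distinction between "$\ca N$ contains the edge" and "$\ca N$ contains every parallel copy of the edge", and checking that the disjointness of $\{a_1,a_2\}$ and $\{b_1,b_2\}$ genuinely follows from Proposition \ref{Prop-Petersen-six-matching} (two edges of $P$ incident to a common vertex cannot both lie in one perfect matching).
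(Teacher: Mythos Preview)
Your proposal is correct and follows essentially the same approach as the paper: both arguments rest on the inequality $n_{\ca N_P}(a)+n_{\ca N_P}(b)\le \mu_{P^{\ca M}}(e)=1+n_{\ca M}(a)+n_{\ca M}(b)$ for the edge $e\in M_a\cap M_b$, and both deduce the second assertion by observing that the two edges $uw,vw$ lie in four distinct $M_i$'s (since a perfect matching cannot contain two edges through $w$), forcing two distinct indices with $d\ge1$. The only cosmetic difference is that you introduce $d(i)=n_{\ca N_P}(i)-n_{\ca M}(i)$ and establish $d(a)+d(b)\le1$ uniformly for all $\binom{6}{2}$ pairs via the edge--pair bijection, whereas the paper applies the same counting only to the single pair $\{i,j\}$ assumed to violate the conclusion.
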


\begin{proof}
	First, suppose that there are two  indices $i$ and $j$ such that $i\ne j$, $n_{\ca N_P}(i)>n_{\ca M}(i)$, and $n_{\ca N_P}(j)>n_{\ca M}(j)$. Let $uv$ be the edge of $P$ belonging to both $M_i$ and $M_j$ by Proposition \ref{Prop-Petersen-six-matching}. 
	Since the perfect matchings of $ {\ca N} $  are pairwise disjoint,  at most $\mu_{P^{\ca M}}(u,v)$ perfect matchings in $ {\ca N} $ can contain an edge connecting $ u $ and $ v $. This implies $n_{\ca N_P}(i) + n_{\ca N_P}(j)\leq \mu_{P^{\ca M}}(u,v)$.
	Then the following contradiction arises. \[\mu_{P^{\ca M}}(u,v) = n_{\ca M}(i) + n_{\ca M}(j) + 1 \le n_{\ca N_P}(i) + n_{\ca M}(j) <  n_{\ca N_P}(i) + n_{\ca N_P}(j).\]
	
	Next, we prove the second part of the lemma. Let $u,v$ be two different vertices both adjacent to the vertex $w$ in $P^{\ca M}$. Suppose by contradiction that $\ca N$ contains all edges of $E_{P^{\ca M}}(\{u,v\},\{w\})$. By Proposition \ref{Prop-Petersen-six-matching}, we may assume without loss of generality that $\{uw\} = M_0\cap M_1$ and $\{vw\}= M_2\cap M_3.$ Then, since all edges of $E_{P^{\ca M}}(\{u,v\},\{w\})$ are contained in $\ca N$, we similarly deduce  that
	\begin{itemize}
		\item $n_{\ca N_P}(0)+n_{\ca N_P}(1) =\mu_{P^{\ca M}}(u,w)= n_{\ca M}(0)+n_{\ca M}(1)+1$;
		\item $n_{\ca N_P}(2)+n_{\ca N_P}(3) =\mu_{P^{\ca M}}(v,w)= n_{\ca M}(2)+n_{\ca M}(3)+1$.
	\end{itemize} Then we conclude that there is $s\in\{0,1\}$ and $t \in \{2,3\}$, such that $n_{\ca N_P}(s)> n_{\ca M}(s)$ and $n_{\ca N_P}(t)> n_{\ca M}(t)$, which is impossible.
\end{proof}

\begin{lem}\label{Lemma-edge-conn-Peterson}
	Let $\ca M$ be a multiset of $k$ perfect matchings of $P$ and let $\mu=\mu(P^{\ca M})$. 
	Then, $\lambda (P^{\ca M})=\min\{k+3,2k+6-2\mu\}.$
\end{lem}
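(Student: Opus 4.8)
The plan is to compute $\lambda(P^{\ca M})$ by analyzing edge cuts $\partial_{P^{\ca M}}(X)$ for $\emptyset \neq X \subsetneq V(P)$, and observing that the quantity splits naturally according to how the added matchings interact with the cut. First I would note that since $P^{\ca M}$ has $10$ vertices, all of even degree-parity considerations aside, and $P$ is $3$-regular with $\lambda(P)=3$, the graph $P^{\ca M}$ is $(k+3)$-regular; hence any single vertex gives $|\partial_{P^{\ca M}}(\{v\})| = k+3$, which already shows $\lambda(P^{\ca M}) \le k+3$. So it remains to understand cuts induced by larger sets $X$ with $2 \le |X| \le 5$ (by symmetry we may take $|X| \le 5$).

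Next I would decompose the cut size. For a fixed $X$, write $|\partial_{P^{\ca M}}(X)| = |\partial_P(X)| + \sum_{i=0}^{5} n_{\ca M}(i)\,|\partial_P(X) \cap M_i|$, since adding a copy of each matching $M_i$ with multiplicity $n_{\ca M}(i)$ contributes exactly $|\partial_P(X)\cap M_i|$ extra edges to the cut. The key combinatorial input is Proposition~\ref{Prop-Petersen-six-matching}: each edge of $P$ lies in exactly two of the $M_i$, so $\sum_{i=0}^5 |\partial_P(X)\cap M_i| = 2|\partial_P(X)|$. The structure of cuts in the Petersen graph then matters: $P$ is cyclically $5$-edge-connected, so the only $3$-edge-cuts are the trivial ones $\partial_P(\{v\})$, and every nontrivial cut has size at least $4$ (in fact the relevant small cases are $|\partial_P(X)| \in \{4,5,6\}$ depending on $|X|$ and whether $P[X]$ is connected). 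For such a cut $\partial_P(X)$, I would argue that among the six matchings $M_0,\dots,M_5$, the intersection pattern with $\partial_P(X)$ forces the minimum contribution to be realized when $\ca M$ is concentrated on a single $M_i$ that meets $\partial_P(X)$ as little as possible — and dually, the value $2k + 6 - 2\mu$ arises from the extreme case where some pair $M_i, M_j$ shares its common edge inside a set $X$ of size $2$.

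Concretely, for the second term in the formula: take $X = \{u,v\}$ where $uv$ is an edge with $\mu_{P^{\ca M}}(u,v) = \mu$. Then $|\partial_{P^{\ca M}}(X)| = 2(k+3) - 2\mu$, since each of $u,v$ has degree $k+3$ and $2\mu$ edges run between them. This gives $\lambda(P^{\ca M}) \le 2k+6-2\mu$, and combined with the vertex bound yields $\lambda(P^{\ca M}) \le \min\{k+3, 2k+6-2\mu\}$. For the reverse inequality I would take an arbitrary $X$, assume $|X|\ge 2$ (else we are in the $k+3$ case), and show $|\partial_{P^{\ca M}}(X)| \ge \min\{k+3, 2k+6-2\mu\}$. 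Using $|\partial_P(X)|\ge 4$ and the edge-in-two-matchings identity, one gets $|\partial_{P^{\ca M}}(X)| \ge 4 + k\cdot(\text{average multiplicity of cut edges})$-type estimates; a careful case analysis on $|X|\in\{2,3,4,5\}$ and on which matchings carry the multiplicity $\mu$ should close the gap. I expect the main obstacle to be the reverse direction: one must rule out an intermediate-size set $X$ (say $|X|=3$ with $|\partial_P(X)|=5$, or $|X|=2$ with the heavy edge not inside $X$) producing a cut smaller than $\min\{k+3, 2k+6-2\mu\}$, which requires bookkeeping the distribution of $n_{\ca M}(i)$ across the $M_i$ relative to a specific Petersen cut and invoking that no matching meets a $4$- or $5$-edge-cut in too few edges. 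The explicit matchings $M_0,\dots,M_5$ from Figure~\ref{fig:Pet_plus_pm} make this finite check tractable.
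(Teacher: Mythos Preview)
Your upper bound argument is correct and matches the paper: a single vertex gives $k+3$, and a pair $\{u,v\}$ with $\mu_{P^{\ca M}}(u,v)=\mu$ gives $2(k+3)-2\mu$.

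For the lower bound, however, you are working much harder than necessary, and the part you flag as ``the main obstacle'' is left unexecuted. The paper's argument avoids the case analysis over $|X|\in\{2,3,4,5\}$ and the bookkeeping of the $n_{\ca M}(i)$ entirely, via two observations you did not make:
\begin{itemize}
\item[\textbf{(odd $|X|$)}] Every perfect matching of $P$ meets every odd cut of $P$ in at least one edge. Hence $P^{\ca M}$ is a $(k+3)$-graph, and any odd cut has size at least $k+3$. This disposes of $|X|\in\{1,3,5\}$ in one line; your proposed decomposition and ``average multiplicity'' estimate are not needed.
\item[\textbf{(even $|X|$)}] One may assume $P^{\ca M}[X]$ (equivalently $P[X]$) is connected, since $X$ is a minimum cut. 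As $P$ has girth $5$, for $|X|\in\{2,4\}$ the induced subgraph $P[X]$ is a tree with at least two leaves. Each leaf $v$ of $P[X]$ has all but at most $\mu$ of its $k+3$ incident edges leaving $X$, so it contributes at least $k+3-\mu$ edges to $\partial_{P^{\ca M}}(X)$. With two leaves this gives $|\partial_{P^{\ca M}}(X)|\ge 2(k+3-\mu)$.
\end{itemize}
This parity split plus the girth/leaf argument is the whole proof; no analysis of which $M_i$ meets which $4$- or $5$-edge-cut of $P$ is required. Your decomposition $|\partial_{P^{\ca M}}(X)| = |\partial_P(X)| + \sum_i n_{\ca M}(i)\,|\partial_P(X)\cap M_i|$ and the identity $\sum_i |\partial_P(X)\cap M_i|=2|\partial_P(X)|$ are correct, but they do not by themselves control the minimum over $X$ without the further casework you only sketch.
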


\begin{proof}
	Note that $P$ is a $3$-graph, and $P^{\ca M}$ is a $(k+3)$-graph since every perfect matching of $ P $ intersects each edge-cut  that separates two vertex sets of odd cardinality.
	Let $X$ be a non-empty proper subset of $ V(P^{\ca M})$ minimizing $|\partial_{P^{\ca M}}(X)|$. It implies that $ P^{\ca M}[X] $ is connected. If $ |X| $ is odd, then $|\partial_{P^{\ca M}}(X)|\ge k+3$ since	$P^{\ca M}$ is a $(k+3)$-graph. If $ |X| $ is even, then it suffices to consider the cases $ |X|\in \{2,4\} $. Since $P$ has girth $5$, either $P[X]$  is a path on two or four vertices, or it is isomorphic to $ K_{1,3} $. Then $\partial_{P^{\ca M}}(X)$ contains at least $k+3-\mu$ edges for each vertex of degree 1 in $P[X]$, and so we get that $|\partial_{P^{\ca M}}(X)|\ge 2(k+3-\mu)$. Consequently, $\lambda (P^{\ca M})\geq\min\{k+3,2k+6-2\mu\}.$
	Finally, observe that $ |\partial_{P^{\ca M}}(\{u,v\})|=2k+6-2\mu$ for every two vertices $ u,v $ with $ \mu_{P^{\ca M}}(u,v)=\mu $. Thus, the statement follows.
\end{proof}

\begin{defi}\label{Definition-t-splicing}
	Let $G$ and $H$ be two graphs with $u,v \in V(G)$ and $x,y \in V(H)$ such that $\mu_ {G}(u,v)\geq t$ and $\mu_ {H}(x,y)\geq r-t$. Then, $(G,u,v)\oplus_t (H,x,y)$ is the graph obtained from $G$ and $H$ by deleting exactly $t$ edges joining $u$ and $v$ in $G$ and $r-t$ edges joining $x$ and $y$ in $H$,  identifying $u$ and $x$ to  a new vertex $w_{ux}$, and  identifying $v$ and $y$ to a new vertex $w_{vy}$, see Figure~\ref{fig:t-splicing}. 
\end{defi}

\begin{figure}[htb]
	\centering
	\resizebox{0.7\textwidth}{!}{
		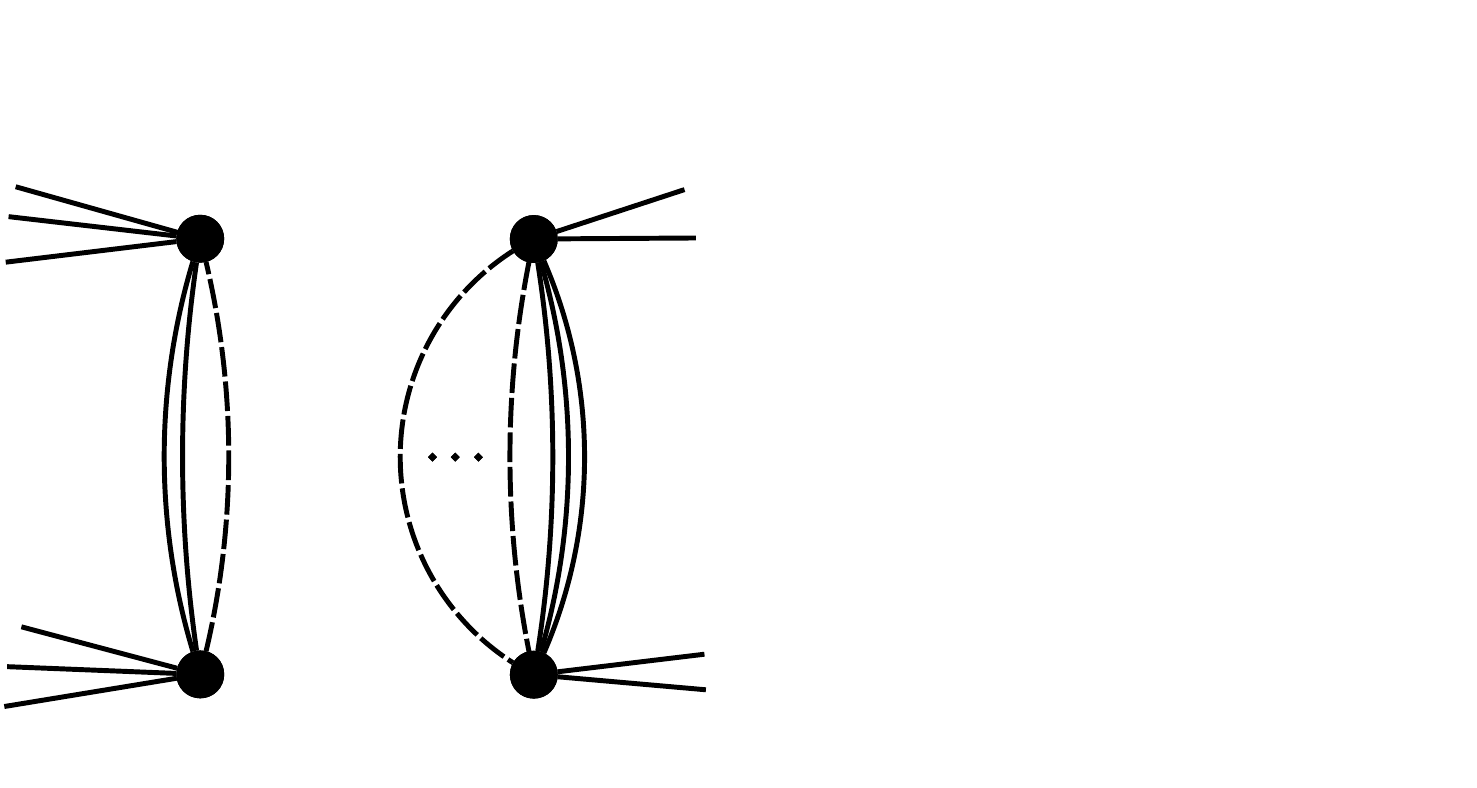}
	\caption{The operation of Definition \ref{Definition-t-splicing}.}
	\label{fig:t-splicing}
\end{figure}

\begin{lem}\label{Lemma-t-splicing-edge-conn}
	Let $G$ and $H$ be two $r$-graphs with $u,v \in V(G)$ and $x,y \in V(H)$ such that $\mu_ {G}(u,v)\geq t$ and $\mu_ {H}(x,y)\geq r-t$. Then, $G'=(G,u,v)\oplus_t (H,x,y)$ is an $r$-graph with $$\lambda(G')=\min\{\lambda(G),\lambda(H)\}.$$
\end{lem}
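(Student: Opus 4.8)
The plan is to describe every edge-cut of $G'$ in terms of edge-cuts of $G$ and $H$, and then to use the hypothesis that $G$ and $H$ are $r$-graphs precisely in those cases that are sensitive to the parity of the side of the cut. First I would check that $G'$ is $r$-regular: the vertex $w_{ux}$ keeps the $r-t$ edges at $u$ surviving the deletion of the $t$ chosen $uv$-edges and gains the $t$ edges at $x$ surviving the deletion of the $r-t$ chosen $xy$-edges, so $\deg_{G'}(w_{ux})=(r-t)+t=r$, and symmetrically $\deg_{G'}(w_{vy})=r$; all other vertices are untouched. I also record that an $r$-graph has even order (otherwise its whole vertex set would be an odd set with empty boundary, contradicting $\lambda_o=r\geq 1$), so $|V(G')|=|V(G)|+|V(H)|-2$ is even.

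The key bookkeeping is the following. Let $G^-$ (resp.\ $H^-$) be $G$ (resp.\ $H$) with the $t$ chosen $uv$-edges (resp.\ $r-t$ chosen $xy$-edges) deleted. Then $E(G')$ is the disjoint union of copies of $E(G^-)$ and $E(H^-)$, and the two copies meet only in the vertices $w_{ux},w_{vy}$. Hence, for every $Y\subseteq V(G')$, writing $A\subseteq V(G)$ for the preimage of $Y$ under the gluing map $u\mapsto w_{ux},\,v\mapsto w_{vy}$ and $B\subseteq V(H)$ for the preimage under $x\mapsto w_{ux},\,y\mapsto w_{vy}$, one gets $|\partial_{G'}(Y)|=|\partial_{G^-}(A)|+|\partial_{H^-}(B)|$. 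Moreover the deleted $uv$-edges lie in $\partial_G(A)$ exactly when $A$ contains exactly one of $u,v$, which holds iff exactly one of $w_{ux},w_{vy}$ lies in $Y$, iff $B$ contains exactly one of $x,y$; so, using $t+(r-t)=r$,
\[
|\partial_{G'}(Y)|=|\partial_G(A)|+|\partial_H(B)|-\varepsilon r ,
\]
where $\varepsilon=1$ if $A$ contains exactly one of $u,v$ and $\varepsilon=0$ otherwise. Throughout I use $|\partial_G(A)|\geq\lambda(G)$ for nonempty proper $A$, $|\partial_G(A)|\geq\lambda_o(G)=r$ for $A$ of odd order (such $A$ being automatically nonempty proper, as $|V(G)|$ is even), and the analogues for $H$.

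Now I would fix a nonempty proper $Y$, additionally of odd order for the claim $\lambda_o(G')\geq r$, and distinguish the two values of $\varepsilon$. If $\varepsilon=0$, then $|\partial_{G'}(Y)|=|\partial_G(A)|+|\partial_H(B)|$ and the number of identified vertices in $Y$ is $0$ or $2$, so $|A|+|B|\equiv|Y|\pmod 2$; if $|Y|$ is odd then one of $A,B$ has odd order, contributing at least $r$, so $|\partial_{G'}(Y)|\geq r$; if $|Y|$ is even then either both $A,B$ are nonempty proper, giving at least $\lambda(G)+\lambda(H)\geq\min\{\lambda(G),\lambda(H)\}$, or (up to complementing $Y$ and the $G\leftrightarrow H$ symmetry) $A=\emptyset$, where $B=Y$ is a nonempty proper subset of $V(H)$ and $|\partial_{G'}(Y)|=|\partial_H(B)|\geq\lambda(H)$. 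If $\varepsilon=1$, say $u\in A$, $v\notin A$, so $x\in B$, $y\notin B$ and $A,B$ are both nonempty proper; now exactly one identified vertex lies in $Y$, so $|A|+|B|\equiv|Y|+1\pmod 2$. When $|Y|$ is even, one of $A,B$ has odd order and contributes at least $r$ while the other contributes at least $\min\{\lambda(G),\lambda(H)\}$, so $|\partial_{G'}(Y)|\geq\min\{\lambda(G),\lambda(H)\}$. When $|Y|$ is odd, $|A|$ and $|B|$ have the same parity: if both are odd then $|\partial_{G'}(Y)|\geq r+r-r=r$; and if both are even then $A\setminus\{u\}$ has odd order, and writing $a$ for the number of edges from $u$ to $A\setminus\{u\}$ we get $|\partial_G(A)|=|\partial_G(A\setminus\{u\})|+r-2a\geq r+r-2(r-t)=2t$, because at least $\mu_G(u,v)\geq t$ edges at $u$ reach $v\notin A$ and hence $a\leq r-t$; symmetrically $|\partial_H(B)|\geq 2(r-t)$, so $|\partial_{G'}(Y)|\geq 2t+2(r-t)-r=r$. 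Altogether every nonempty proper $Y$ has $|\partial_{G'}(Y)|\geq\min\{\lambda(G),\lambda(H)\}$ and every $Y$ of odd order has $|\partial_{G'}(Y)|\geq r$; since a single vertex shows $\lambda_o(G')\leq r$, we conclude that $G'$ is an $r$-graph and $\lambda(G')\geq\min\{\lambda(G),\lambda(H)\}$.

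For the reverse inequality I would exhibit explicit small cuts. Take a minimum cut $\partial_G(X)$ of $G$ and, replacing $X$ by its complement if necessary, assume $v\notin X$. If also $u\notin X$, then $X$ viewed as a vertex subset of $G'$ is nonempty proper with $|\partial_{G'}(X)|=|\partial_G(X)|=\lambda(G)$. If $u\in X$, let $Y$ have preimages $A=X$ and $B=\{x\}$; then $Y$ is nonempty proper and $|\partial_{G'}(Y)|=(|\partial_G(X)|-t)+(\deg_H(x)-(r-t))=\lambda(G)$. In both cases $\lambda(G')\leq\lambda(G)$, and, starting instead from a minimum cut of $H$, symmetrically $\lambda(G')\leq\lambda(H)$; with the previous paragraph this gives $\lambda(G')=\min\{\lambda(G),\lambda(H)\}$. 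The step I expect to be the main obstacle is exactly the sub-case $\varepsilon=1$ with $|Y|$ odd and $|A|,|B|$ both even: there the plain bounds $|\partial_G(A)|\geq\lambda(G)$ and $|\partial_H(B)|\geq\lambda(H)$ are too weak to absorb the term $-r$, and one genuinely needs the $r$-graph hypotheses on $G$ and $H$ via the vertex-peeling estimate above. A lesser but unavoidable nuisance is the careful handling of the degenerate cases $A\in\{\emptyset,V(G)\}$ or $B\in\{\emptyset,V(H)\}$ when $\varepsilon=0$, which I dispatch by complementation together with the parity of $|V(G)|$ and $|V(H)|$.
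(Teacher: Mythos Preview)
Your proof is correct and follows essentially the same approach as the paper: both split into cases according to whether the cut separates $w_{ux}$ and $w_{vy}$ (your $\varepsilon\in\{0,1\}$, the paper's $|S\cap\{w_{ux},w_{vy}\}|\in\{0,1\}$), realize the upper bound by transporting minimum cuts of $G$ and $H$ into $G'$, and treat the delicate sub-case ($|Y|$ odd, both preimages even) via the multiplicity hypotheses $\mu_G(u,v)\ge t$, $\mu_H(x,y)\ge r-t$. Your write-up is in fact a little more explicit than the paper's in two places: you spell out the vertex-peeling estimate $|\partial_G(A)|=|\partial_G(A\setminus\{u\})|+r-2a\ge 2t$, and you address the degenerate situation $A=\emptyset$ in the $\varepsilon=0$ case directly rather than leaving it implicit.
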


\begin{proof}
By the construction in Definition \ref{Definition-t-splicing}, $G'=(G,u,v)\oplus_t (H,x,y)$ is $r$-regular. 
For every $Y\subseteq V(G) \setminus \{v\}$, we have $|\partial_G(Y)|=|\partial_{G'}((Y\setminus\{u\})\cup\{w_{ux}\})|$ if $u \in Y$ and $|\partial_G(Y)|=|\partial_{G'}(Y)|$ otherwise. Similarly, for every $Y'\subseteq V(H) \setminus \{y\}$, we have $|\partial_H(Y')|=|\partial_{G'}((Y'\setminus\{x\})\cup\{w_{ux}\})|$ if $x \in Y'$ and $|\partial_H(Y')|=|\partial_{G'}(Y')|$ otherwise.
As a consequence, there is an $X \subseteq V(G')$ with $|\partial _{G'}(X)|=\min \{\lambda(G),\lambda(H)\}$.
Thus, it suffices to prove that, for each non-empty proper subset $S\subset V(G')$, we have $|\partial_{G'}(S)|\geq \min\{\lambda(G),\lambda(H)\}$ if $|S|$ is even, and  $|\partial_{G'}(S)|\geq r$ if $|S|$ is odd.
Since, for all $Y\subseteq V(G'), \partial_{G'}(Y)=\partial_{G'}(V(G')\setminus Y)$, we just need to  consider the two cases when $|S\cap\{w_{ux},w_{vy}\}|=0$ and $|S\cap\{w_{ux},w_{vy}\}|=1$.

First, assume  $|S\cap\{w_{ux},w_{vy}\}|=0$.  It is clear that $|\partial_{G'}(S)|= |\partial_{G}(S\cap V(G))|+|\partial_{H}(S\cap V(H))|$. Note that one of $|S\cap V(G)|$ and $|S\cap V(H)|$ is odd if $|S|$ is odd. So $|\partial_{G'}(S)|\geq \min \{\lambda(G),\lambda(H)\}$ if $|S|$ is even, and  $|\partial_{G'}(S)|\geq r$ if $|S|$ is odd. 

Next, we assume $|S\cap\{w_{ux},w_{vy}\}|=1$. Without loss of generality, say $w_{ux} \in S$. Note that 
$|\partial_{G'}(S)|= |\partial_{G}((S\setminus\{w_{ux} \}\cup \{u \})\cap V(G))|-t+|\partial_{H}((S\setminus\{w_{ux} \}\cup \{x \})\cap V(H))|-(r-t)$. If one of $|(S\setminus\{w_{ux} \}\cup \{u \})\cap V(G)|$ and $|(S\setminus\{w_{ux} \}\cup \{x \})\cap V(H)|$ is odd, then the other has the same parity as $|S|$. This implies $|\partial_{G'}(S)|\geq \min \{\lambda(G),\lambda(H)\}$ if $|S|$ is even, and  $|\partial_{G'}(S)|\geq r$ if $|S|$ is odd. Thus, the remaining case is that both $|(S\setminus\{w_{ux} \}\cup \{u \})\cap V(G)|$ and $|(S\setminus\{w_{ux} \}\cup \{x \})\cap V(H)|$ are even, and $|S|$ is odd.  
Since  $|(S\setminus\{w_{ux} \})\cap V(G)|$ is odd in this case and $G$ is an $r$-graph, we obtain $|\partial_{G}((S\setminus\{w_{ux} \}\cup \{u \})\cap V(G))|\geq 2 \mu_ {G}(u,v)\geq 2t$. Similarly,  $ |\partial_{H}((S\setminus\{w_{ux} \}\cup \{x \})\cap V(H))|\geq 2\mu_ {H}(x,y)\geq2(r-t)$. So $|\partial_{G'}(S)|\geq 2t-t+2(r-t)-(r-t)=r$.
 This completes the proof.
\end{proof}

\begin{lem}\label{lem:kSumWithP}
	Let $r,t$ be two integers with $2 \leq t < r$, let $G$ be an $r$-graph and let $u,v \in V(G)$ such that $\mu_G(u,v) \geq t$. Let $\ca M$ be a multiset of $r-3$ perfect matchings of $P$, let $x,y \in V(P^{\ca M})$ such that $\mu_{P^{\ca M}}(x,y)\geq r-t$ and let $G'=(G,u,v) \oplus_{t} (P^{\ca M},x,y)$. If $G'$ has a $k$-PDPM $\ca N'$, then $G$ has a $k$-PDPM $\ca N$ such that
	\begin{itemize}
		\item[$(i)$] $\ca N$ avoids at least one edge connecting $u$ and $v$,
		\item[$(ii)$] for every $e \in E(G'[V(G)\setminus\{u,v\}])$, if $\ca N'$ avoids $e$, then $\ca N$ avoids $e$.
	\end{itemize}
\end{lem}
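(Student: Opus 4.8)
The plan is to lift the given $k$-PDPM $\ca N'$ of $G'$ back to $G$ by splitting $E(G')$ into the part $E_G$ inherited from $G$ (with $u,v$ renamed $w_{ux},w_{vy}$) and the part $E_P$ inherited from $P^{\ca M}$ (with $x,y$ renamed $w_{ux},w_{vy}$), and then restricting each $N\in\ca N'$ to $E_G$. Before doing so I would record the relevant facts: since $|\ca M|=r-3$, the graph $P^{\ca M}$ is an $r$-graph (as noted in the proof of Lemma~\ref{Lemma-edge-conn-Peterson}), hence $G'$ is an $r$-graph by Lemma~\ref{Lemma-t-splicing-edge-conn}, and both $G$ and $P^{\ca M}$ have even order. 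The vertex $w_{ux}$ has degree $r$ in $G'$, of which $r-t$ incident edges lie in $E_G$ and $t$ in $E_P$. Finally, every edge of $P^{\ca M}$ is a copy of an edge of $P$ and $\mu_{P^{\ca M}}(x,y)\ge r-t\ge1$, so $xy\in E(P)$; as $P$ is cubic of girth $5$, $x$ has exactly two further neighbours $a,b$ in $P$, these are distinct and are also neighbours of $x$ in $P^{\ca M}$, and $|E_{P^{\ca M}}(\{a,b\},\{x\})|=r-\mu_{P^{\ca M}}(x,y)\ge r-(1+|\ca M|)=2$.

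For the lifting, fix $N\in\ca N'$. Each vertex of $V(G)\setminus\{u,v\}$ meets only $E_G$-edges, so the restriction of $N$ to $E_G$, read as an edge set of $G$, covers every such vertex exactly once, and covers $u$ (resp.\ $v$) precisely when $N$ meets $w_{ux}$ (resp.\ $w_{vy}$) in an $E_G$-edge. A parity count using $|V(G)|$ even then forces these two events to occur simultaneously; let $J_1\subseteq\{1,\dots,k\}$ be the indices where they do and $J_0$ the complement. For $j\in J_1$ the restriction is already a perfect matching $N_j$ of $G$; for $j\in J_0$ it is a perfect matching of $G-u-v$, which I complete to a perfect matching $N_j$ of $G$ by adding one edge joining $u$ and $v$. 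The restrictions to $E(G[V(G)\setminus\{u,v\}])$ are pairwise disjoint because $\ca N'$ is; the $uv$-edges used by $\bigcup_{j\in J_1}N_j$ are pairwise distinct non-deleted $uv$-edges, so at most $\mu_G(u,v)-t$ of them; and $|J_0|$, being the number of the $k$ distinct edges of $\ca N'$ at $w_{ux}$ that lie in $E_P$, is at most $t$. Hence at least $t\ge|J_0|$ $uv$-edges of $G$ remain unused, so the added edges can be chosen pairwise distinct, and $\ca N=\{N_1,\dots,N_k\}$ is a $k$-PDPM of $G$. Property $(ii)$ is immediate, since for $e\in E(G'[V(G)\setminus\{u,v\}])$ the edges of $N_j$ inside $V(G)\setminus\{u,v\}$ all belong to $N$, so avoiding $e$ in $\ca N'$ forces avoiding it in $\ca N$.

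The crux is $(i)$. By the above, $\ca N$ uses at most $(\mu_G(u,v)-t)+|J_0|$ edges joining $u$ and $v$, so it suffices to prove $|J_0|\le t-1$, i.e.\ that $\ca N'$ does not use all $t$ of the $E_P$-edges at $w_{ux}$. Here I would run the symmetric construction with $G$ and $P^{\ca M}$ interchanged: restricting the members of $\ca N'$ to $E_P$ and completing with $xy$-edges where needed (the count works out as before, using $|J_1|\le r-t$), this yields a $k$-PDPM $\ca N^{*}$ of $P^{\ca M}$ whose members contain every $E_P$-edge used by $\ca N'$. Now suppose $|J_0|=t$, so $\ca N'$ uses all $E_P$-edges at $w_{ux}$; in particular it uses all $\ge 2$ edges of $E_{P^{\ca M}}(\{a,b\},\{x\})$, hence so does $\ca N^{*}$. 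Since $a,b,x$ are distinct and $x$ is adjacent to both $a$ and $b$ in $P^{\ca M}$, this contradicts Lemma~\ref{Lemma-no-cover-all-edges}. Therefore $|J_0|\le t-1$, so $\ca N$ uses at most $\mu_G(u,v)-1$ of the $uv$-edges of $G$, giving $(i)$.

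I expect the only real obstacle to be isolating this last point: one must notice that the number of $uv$-edges the lifted PDPM is forced to spend is controlled by $|J_0|$, and that the extremal case $|J_0|=t$ is precisely the configuration that Lemma~\ref{Lemma-no-cover-all-edges} forbids on the Petersen side; everything else is bookkeeping about parities and disjointness of the lifted matchings.
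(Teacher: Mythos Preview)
Your proposal is correct and follows essentially the same route as the paper: restrict each $N\in\ca N'$ to the $G$-side and to the $P^{\ca M}$-side using the parity of $|V(P^{\ca M})\setminus\{x,y\}|$, add $uv$- or $xy$-edges as needed, and obtain the contradiction for $(i)$ from the second part of Lemma~\ref{Lemma-no-cover-all-edges}. The only cosmetic differences are that the paper phrases the contradiction as ``$\ca N$ contains all $uv$-edges $\Rightarrow$ $\ca N'$ contains all of $\partial_{G'}(V(G))$'' and applies Lemma~\ref{Lemma-no-cover-all-edges} at the vertex $y$ rather than at $x$; your direct bound $|J_0|\le t-1$ and your more explicit bookkeeping for the pairwise disjointness of the lifted matchings are, if anything, a cleaner presentation of the same idea.
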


\begin{proof}
	Assume that $\ca N'$ is a $k$-PDPM of $G'$. Every perfect matching of $G'$ contains either zero or exactly two edges of $\partial_{G'}(V(P^{\ca M})\setminus\{x,y\})$, since $|V(P^{\ca M})\setminus\{x,y\}|$ is even. The same holds for $ V (G) \setminus\{u,v\}$, since $|V (G) \setminus\{u,v\}| $ is also even.  Hence, every perfect matching of $ G' $ can  be transformed into a perfect matching of $ G $ and of $ P^\mathcal{M} $ by adding either $ uv $ or $ xy $. In particular, $\ca N'$ can be transformed into a $k$-PDPM $\ca N$ of $G$, which satisfies $(ii)$. Suppose that $\ca N$ contains all edges connecting $u$ and $v$, which implies that $\ca N'$ contains all edges of $\partial_{G'}(V(G))$. As a consequence, $P^{\ca M}$ has a $k$-PDPM that contains all edges of $\partial_{P^{\ca M}}(\{x,y\})$. This means that $P^{\ca M}$ has a $k$-PDPM containing all edges incident with $y$ and not with $x$, a contradiction to Lemma \ref{Lemma-no-cover-all-edges}.
\end{proof}

\section{An upper bound for $m(t,r)$ depending on $t$}

Recall that $m(t,r)\le m(t',r)$ whenever $t\le t'$. For an $r$-graph $G$ with a subset $ X\subseteq V(G) $,  we observe that $ |\partial_G(X)|=r\cdot |X|-2|E(G[X])|$ is even  if $ |X| $ is even.
Therefore, the edge-connectivity of an $ r $-graph is either $r$ or an even number. By Rizzi \cite{rizzi1999indecomposable}, $m(4,r)=1$ for every $r\geq 4$. Furthermore,  $r-2$ is a trivial upper bound for $m(2l,r)$ since $  m(4,5)=1$ and for each $r \neq 5$ there are $r$-edge-connected $r$-graphs that are class 2 \cite{MMSW_pdpm, Meredith}. We will improve this bound as follows.

\begin{theo}\label{theo:main_result}
	For every $l \geq 3$ and $ r\geq2l $, $m(2l,r) \leq 3 l - 6$.
\end{theo}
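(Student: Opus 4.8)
The plan is to construct, for every $l\geq 3$ and $r\geq 2l$, a $2l$-edge-connected $r$-graph that has no $(3l-5)$-PDPM. The natural building block is the Petersen-based graph $P^{\ca M}$ from Section~2: by Lemma~\ref{Lemma-edge-conn-Peterson}, by choosing a multiset $\ca M$ of $r-3$ perfect matchings of $P$ with the right multiplicities we can control both the edge-connectivity of $P^{\ca M}$ and, crucially, the multiplicity $\mu(P^{\ca M})$ of its densest pair of vertices, which will be the pair used for splicing. First I would fix a suitable $\ca M$: since an $r$-graph has to be built, $\ca M$ must have exactly $r-3$ members so that $P^{\ca M}$ is an $r$-graph. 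To get edge-connectivity close to $2l$, Lemma~\ref{Lemma-edge-conn-Peterson} tells us we need $\min\{r, 2(r-3)+6-2\mu\}=\min\{r,2r-2\mu\}$, so we want $\mu$ roughly $r-l$, i.e. we load about $r-l-?$ copies onto a single $M_i$ (say $M_0$), leaving the remaining copies distributed so that every pair other than the $u_iv_i$ pairs stays sparse.

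Next I would perform an iterated splicing. Starting from a single copy of a graph $P^{\ca M_0}$ and repeatedly applying $\oplus_t$ (Definition~\ref{Definition-t-splicing}) with further copies $P^{\ca M_j}$, using at each step the dense pair with multiplicity $\mu$ and splicing along an appropriate $t$, we obtain by Lemma~\ref{Lemma-t-splicing-edge-conn} an $r$-graph $G$ whose edge-connectivity equals the minimum of the edge-connectivities of the pieces, which we have arranged to be $2l$ (or just above). The point of using several copies is a counting/parity obstruction: by Lemma~\ref{lem:kSumWithP}, any $k$-PDPM of the spliced graph pushes down to a $k$-PDPM on each Petersen piece that avoids at least one edge of the dense pair and respects avoidance of interior edges; and Lemma~\ref{Lemma-no-cover-all-edges} says a set of pairwise disjoint perfect matchings of $P^{\ca M}$ cannot cover all the edges at a vertex $y\ne x$ of the splice pair, nor the edges of any ``cherry'' $E_{P^{\ca M}}(\{u,v\},\{w\})$. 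Combining these across the several pieces forces at least $6$ edges at the gluing vertices to be missed in total, and by choosing the number of pieces and the value of $l$ correctly this caps the number of disjoint perfect matchings at $3l-6$.

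More concretely, each copy of $P^{\ca M}$ contributes, via Lemma~\ref{Lemma-no-cover-all-edges}, a deficiency: among the $6$ ``types'' $M_0,\dots,M_5$ at most one can be over-represented relative to $\ca M$, so each copy ``loses'' at least $3$ of the possible directions of extension at each of its two splice vertices. I would use three copies of $P^{\ca M}$, spliced in a path or cycle arrangement, together with one ``host'' $r$-graph if needed to absorb leftover parity; the host is the straightforward part. The deficiencies from the three Petersen copies add up: the total number of perfect matchings that can be packed is bounded by $r$ minus the sum of the per-copy losses, and a careful bookkeeping (using that $n_{\ca N_P}(i)>n_{\ca M}(i)$ for at most one $i$ per copy, and that the dense pair absorbs $\mu$ of the matchings) yields the bound $3l-6$. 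The key algebraic identity to verify is that with $\mu$ chosen so that $\lambda(P^{\ca M})=2l$, the quantity $r-(\text{sum of losses})$ comes out to exactly $3l-6$ independently of $r$ — this is where the ``$3l$'' rather than ``$2l$'' or ``$r$'' enters, and it is the heart of the construction.

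\textbf{The main obstacle} I anticipate is twofold. First, arranging the multiset $\ca M$ so that $P^{\ca M}$ simultaneously (a) is an $r$-graph, (b) has edge-connectivity exactly $2l$, (c) has its \emph{unique} dense pair at a vertex pair suitable for splicing, and (d) the dense pair is an $M_i$-pair (so that Lemma~\ref{Lemma-no-cover-all-edges}'s structural restriction bites in the way we want) — all four constraints pull against one another for small $l$ or when $r$ is close to $2l$, and the boundary cases $l=3$ and $r=2l$ will likely need to be checked by hand. Second, the counting argument must be tight: it is easy to prove $m(2l,r)\le r-2$ or even $r-3$ by soft arguments, but squeezing down to $3l-6$ requires that the losses from the three Petersen copies be genuinely additive and not partially cancelled by the freedom in choosing which index $i$ is over-represented in each copy; ensuring the over-represented indices in different copies are forced to be ``incompatible'' (so their slack cannot be reused) is the delicate combinatorial point, and I expect it to rely on a clever alignment of the labelings $M_1,\dots,M_5$ of the three copies relative to the splice edges.
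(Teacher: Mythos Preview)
Your plan misidentifies where the number $3l-6$ comes from, and the ``additive deficiency'' counting over three Petersen pieces is neither carried out nor correct as stated. In the paper the bound $3l-6$ is not squeezed out of Lemma~\ref{Lemma-no-cover-all-edges} by summing losses over several copies; it appears simply as $(3l-4)-2$. The argument is induction on $r$: for $l\ge 4$ the base graph is a \emph{single} $P^{\ca M_0}$ with $|\ca M_0|=3l-7$, namely $G^{3l-4}=P+(l-2)M_0+(l-3)M_1+(l-3)M_2+M_3$, which is a $2l$-edge-connected $(3l-4)$-graph of class~2 and therefore has no $(3l-5)$-PDPM. The inductive step (Lemma~\ref{lem:construction}) passes from $r$ to $r+1$ by adding a perfect matching $M$ to $G$ and then splicing a copy of a suitable $P_{(r+1,l)}$ at \emph{every} edge of $M$; Lemma~\ref{lem:kSumWithP} is applied once per copy to push a hypothetical $(3l-5)$-PDPM of $G'$ down to a $(3l-5)$-PDPM of $G$ that avoids $M$, contradicting the induction hypothesis. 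So the role of the Petersen splicings is to preserve ``no $(3l-5)$-PDPM'' across the induction, not to manufacture the constant $3l-6$ by a counting identity. Your proposed identity ``$r-(\text{sum of losses})=3l-6$ independently of $r$'' has no obvious realisation with three copies.

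A second, separate gap is the case $l=3$. Here $3l-4=5<2l=6$, so there is no $P^{\ca M}$ that is simultaneously a $5$-graph and $6$-edge-connected; you cannot ``check by hand'' within the Petersen framework. The paper instead uses a bespoke $6$-edge-connected $6$-graph $G^6$, obtained by $3$-expanding four vertices of the graph $G_1$ built from $Q_1$-blocks in~\cite{MMSW_pdpm}, and invokes Lemma~\ref{lem:Q_1} to show $G^6$ has no $4$-PDPM. This construction is unrelated to anything in your outline.
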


As mentioned above, we know $m(2l,r)\leq r-2 $ for every $ r\geq 3 $. It implies that Theorem \ref{theo:main_result} trivially holds for the case $ 2l\leq r \leq3l-4 $. Thus,  it suffices to prove Theorem \ref{theo:main_result} for the case $ r\geq 3l-3 $.

We will construct $2l$-edge-connected $r$-graphs inductively
starting with a $2l$-edge-connected $(3l-4)$-graph 
without a $(3l-5)$-PDPM if $l \geq 4$ and a
$6$-edge-connected $6$-graph without a $4$-PDPM if $l = 3$.

For this we describe the induction step in the next section and then in the following sections we give the base graphs
for the two cases. Finally we deduce the statement of Theorem \ref{theo:main_result}.

\subsubsection*{Induction step from $r$ to $r+1$}

\begin{lem}\label{lem:construction}
	Let $r, l, k$ be integers such that $r \geq 3l-4$, $l \geq 2$ and $2\le k\le r$. If there is an $r$-graph $G$ such that
	\begin{itemize}
		\item $\lambda(G) \geq 2l$,
		\item $G$ has a perfect matching $M$ such that $\mu_G(u,v)\ge l-1$ for every $uv\in M$,
		\item $G$ has no $k$-PDPM,
	\end{itemize}
	then there is an $(r+1)$-graph $G'$ such that
	\begin{itemize}
		\item $\lambda(G') \geq 2l$,
		\item $G'$ has a perfect matching $M'$ such that $\mu_{G'}(u,v)\ge l-1$ for every $uv\in M'$,
		\item $G'$ has no $k$-PDPM.
	\end{itemize}
\end{lem}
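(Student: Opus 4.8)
The plan is to build $G'$ from $G$ in two stages. First I would add a perfect matching to raise the degree to $r+1$; since any $(r+1)$-regular graph containing $G$ is $G$ plus the edges of a $1$-factor, this is unavoidable, and taking the given matching $M$ has the bonus of raising the multiplicity along $M$ by one. Then, for each edge of $M$ — precisely the edges with the guaranteed multiplicity $\mu_G(u,v)\ge l-1$ that the operation $\oplus_{t}$ of Definition~\ref{Definition-t-splicing} needs — I would splice in a fresh copy of a single Petersen gadget $P^{\ca M}$. For the gadget I take $\ca M$ to be the multiset of $r-2$ perfect matchings of $P$ consisting of $r-2l+2$ copies of $M_0$ and $l-2$ copies each of $M_1$ and $M_2$ (so $n_{\ca M}(0)=r-2l+2$, $n_{\ca M}(1)=n_{\ca M}(2)=l-2$, $n_{\ca M}(i)=0$ for $i\ge 3$; the hypothesis $r\ge 3l-4$ makes $n_{\ca M}(0)\ge l-2\ge 0$). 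Then $P^{\ca M}$ is an $(r+1)$-graph, a short computation using $r\ge 3l-4$ gives $\mu(P^{\ca M})=r+1-l$, attained at the spoke $u_1v_1\in M_0\cap M_1$, and since $r\ge\lambda(G)\ge 2l$ forces $r+1>2l$, Lemma~\ref{Lemma-edge-conn-Peterson} yields $\lambda(P^{\ca M})=2l$. Moreover every edge of the perfect matching $M_2$ of $P$ has multiplicity at least $l-1$ in $P^{\ca M}$: the edge of $M_2$ shared with $M_i$ has multiplicity $1+n_{\ca M}(2)+n_{\ca M}(i)$, equal to $r+1-l$ when $i=0$ and at least $1+(l-2)=l-1$ when $i\ge 1$.

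Now let $G_0=G+M$. Because a perfect matching crosses every odd edge cut an odd number of times, $G_0$ is an $(r+1)$-graph with $\lambda(G_0)\ge\lambda(G)\ge 2l$ and $\mu_{G_0}(u,v)\ge l$ for every $uv\in M$. Writing $M=\{a_1b_1,\dots,a_pb_p\}$, I would set $G_i=(G_{i-1},a_i,b_i)\oplus_{l}(Q_i,u_1,v_1)$, where $Q_i$ is a disjoint copy of $P^{\ca M}$; this is legitimate since the edges of $M$ are pairwise disjoint, so $\mu_{G_{i-1}}(a_i,b_i)=\mu_{G_0}(a_i,b_i)\ge l$, while $\mu_{Q_i}(u_1,v_1)=(r+1)-l$. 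Put $G'=G_p$. Applying Lemma~\ref{Lemma-t-splicing-edge-conn} $p$ times shows $G'$ is an $(r+1)$-graph with $\lambda(G')=\min\{\lambda(G_0),2l\}\ge 2l$. For the required matching of $G'$: in each $Q_i$ the perfect matching $M_2$ avoids $u_1v_1$, hence becomes a perfect matching of the block $\{w_{a_iu_1},w_{b_iv_1}\}\cup(V(Q_i)\setminus\{u_1,v_1\})$, all of whose edges still have multiplicity at least $l-1$ in $G'$ (the splicings delete only copies of the $a_ib_i$ and of $u_1v_1$). These $p$ blocks partition $V(G')$, so their union $M'$ is a perfect matching of $G'$ with $\mu_{G'}(x,y)\ge l-1$ for every $xy\in M'$.

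The crux is that $G'$ has no $k$-PDPM. Suppose $\ca N_p$ is one of $G'=G_p$. For $i=p,p-1,\dots,1$ I would apply Lemma~\ref{lem:kSumWithP} to $G_i=(G_{i-1},a_i,b_i)\oplus_{l}(Q_i,u_1,v_1)$ — whose hypotheses hold: $G_{i-1}$ is an $(r+1)$-graph, $Q_i=P^{\ca M}$ with $|\ca M|=r-2$, $\mu_{G_{i-1}}(a_i,b_i)\ge l$, $\mu_{Q_i}(u_1,v_1)\ge(r+1)-l$, and $2\le l<r+1$ — obtaining a $k$-PDPM $\ca N_{i-1}$ of $G_{i-1}$ that avoids at least one $a_ib_i$-edge and, by part $(ii)$, avoids every edge of $G_i[V(G_{i-1})\setminus\{a_i,b_i\}]$ already avoided by $\ca N_i$. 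Since $M$ is a matching, for $j>i$ the vertices $a_j,b_j$ lie in $V(G_{i-1})\setminus\{a_i,b_i\}$, so a previously avoided $a_jb_j$-edge remains avoided after the $i$-th step; inductively, $\ca N_0$ avoids at least one $a_ib_i$-edge for every $i\in\{1,\dots,p\}$. Thus $\ca N_0$ is a $k$-PDPM of $G_0=G+M$ that, for each $uv\in M$, misses one of the $\mu_G(u,v)+1$ parallel $uv$-edges; deleting one missed copy per edge of $M$ turns $G_0$ back into $G$, so $\ca N_0$ is a $k$-PDPM of $G$, contradicting the hypothesis on $G$.

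I expect the main obstacles to be twofold. The first is the numerology of $\ca M$: a single gadget $P^{\ca M}$ must at once be $(r+1)$-regular and $2l$-edge-connected, contain a vertex pair of multiplicity exactly $(r+1)-l$ to splice along, and admit a perfect matching of $P$ avoiding that pair all of whose edges have multiplicity at least $l-1$; the hypothesis $r\ge 3l-4$ is precisely what makes these requirements compatible (and is tight at $r=3l-4$, where $\ca M$ reduces to $l-2$ copies each of $M_0,M_1,M_2$). The second, more delicate, is the bookkeeping in the no-$k$-PDPM argument: unwinding the construction one gadget at a time via Lemma~\ref{lem:kSumWithP} must not lose the avoidances built up in earlier steps, and this works exactly because the edges of $M$ are pairwise disjoint, which keeps each previously avoided parallel edge inside the induced subgraph to which part $(ii)$ of Lemma~\ref{lem:kSumWithP} applies.
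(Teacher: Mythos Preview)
Your proof is correct and follows essentially the same route as the paper: form $G_0=G+M$, splice a Petersen gadget $P^{\ca M}$ at each edge of $M$ via $\oplus_l$, take the union of the copies of $M_2$ as $M'$, deduce $\lambda(G')\ge 2l$ from Lemmas~\ref{Lemma-edge-conn-Peterson} and~\ref{Lemma-t-splicing-edge-conn}, and unwind with Lemma~\ref{lem:kSumWithP} to rule out a $k$-PDPM. The only difference is cosmetic: the paper uses $\ca M=\lceil\tfrac{r-l}{2}\rceil M_0+\lfloor\tfrac{r-l}{2}\rfloor M_1+(l-2)M_2$, whereas you take $(r-2l+2)M_0+(l-2)M_1+(l-2)M_2$; both choices yield $\mu(P^{\ca M})=r+1-l$ at $u_1v_1$, $\lambda(P^{\ca M})=2l$, and every edge of $M_2$ with multiplicity at least $l-1$, and both need exactly $r\ge 3l-4$.
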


\begin{proof}
Assume that the order of $ G$  is  $2s $ and let $M=\{x_1y_1, \ldots, x_sy_s\}$. In order to construct $G'$ we define a graph $P_{(r+1,l)}$ by
	\begin{align*}
	P_{(r+1,l)}= P  + \left \lceil \frac{r-l}{2} \right \rceil M_0 + \left \lfloor \frac{r-l}{2} \right \rfloor M_1 + (l-2)M_2.
	\end{align*}
	Since $G$ is $2l$-edge-connected, we have $r\geq2l$. Thus, $P_{(r+1,l)}$ is well defined. For every $i \in \{1, \ldots, s\}$, take a copy $P_{(r+1,l)}^i$ of $P_{(r+1,l)}$. In each copy, the vertices and perfect matchings are labelled accordingly by using an upper index, i.e.\ the vertex of $P_{(r+1,l)}^i$ corresponding to $u_1$ in $P_{(r+1,l)}$ is labeled as $u_1^i$. Define graphs $H^0, \ldots, H^s$ inductively as follows:
	\begin{align*}
	&H^0:=G+M, \\ &H^{i}:= (H^{i-1},x_i,y_i) \oplus_{l} (P_{(r+1,l)}^i,u_1^i,v_1^i) \text{ for every } i \in \{1,\ldots,s\}.
	\end{align*}
	Note that $H^0$ and $P_{(r+1,l)}$ are both $(r+1)$-graphs. Furthermore, $\mu_{H^0}(x_i,y_i) \geq l$ for every $i \in \{1,\ldots,s\}$ by the choice of $M$. Recall that $u_1v_1 \in E(P)$ is the unique edge in $M_0 \cap M_1$. Thus, $\mu_{P_{(r+1,l)}}(u_1,v_1) = (r+1)-l$ by the definition of $P_{(r+1,l)}$. As a consequence, $H^0, \ldots ,H^s$ are well defined. Set
	\begin{align*}
	G':=H^s \qquad \text{ and } \qquad M':= \bigcup\limits_{i=1}^s M_2^i.
	\end{align*}
	
\begin{figure}[htbp]
\subfigure{
\begin{minipage}[t]{6cm}
\centering
\includegraphics[scale=0.3]{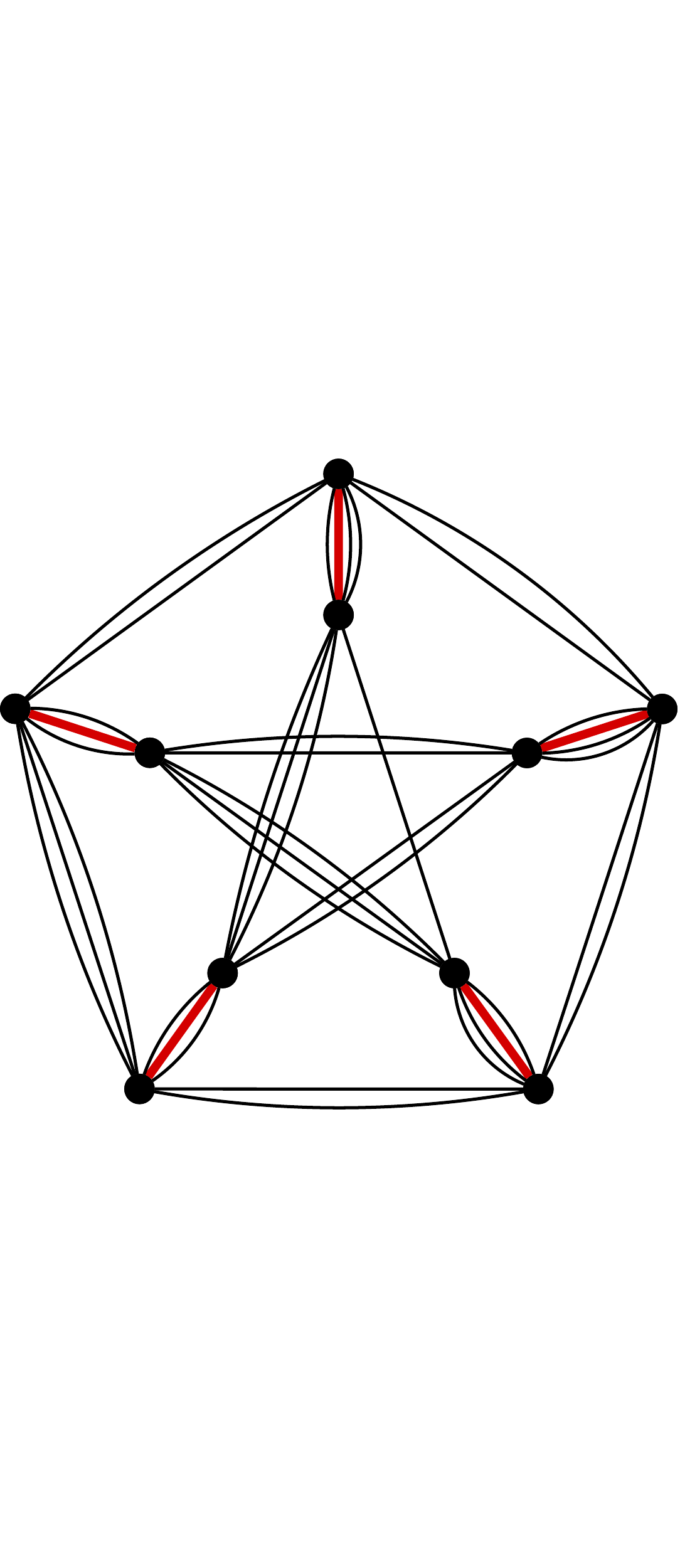}
\end{minipage}
}
\subfigure{
\begin{minipage}[t]{9.5cm}
\centering
\includegraphics[scale=0.3]{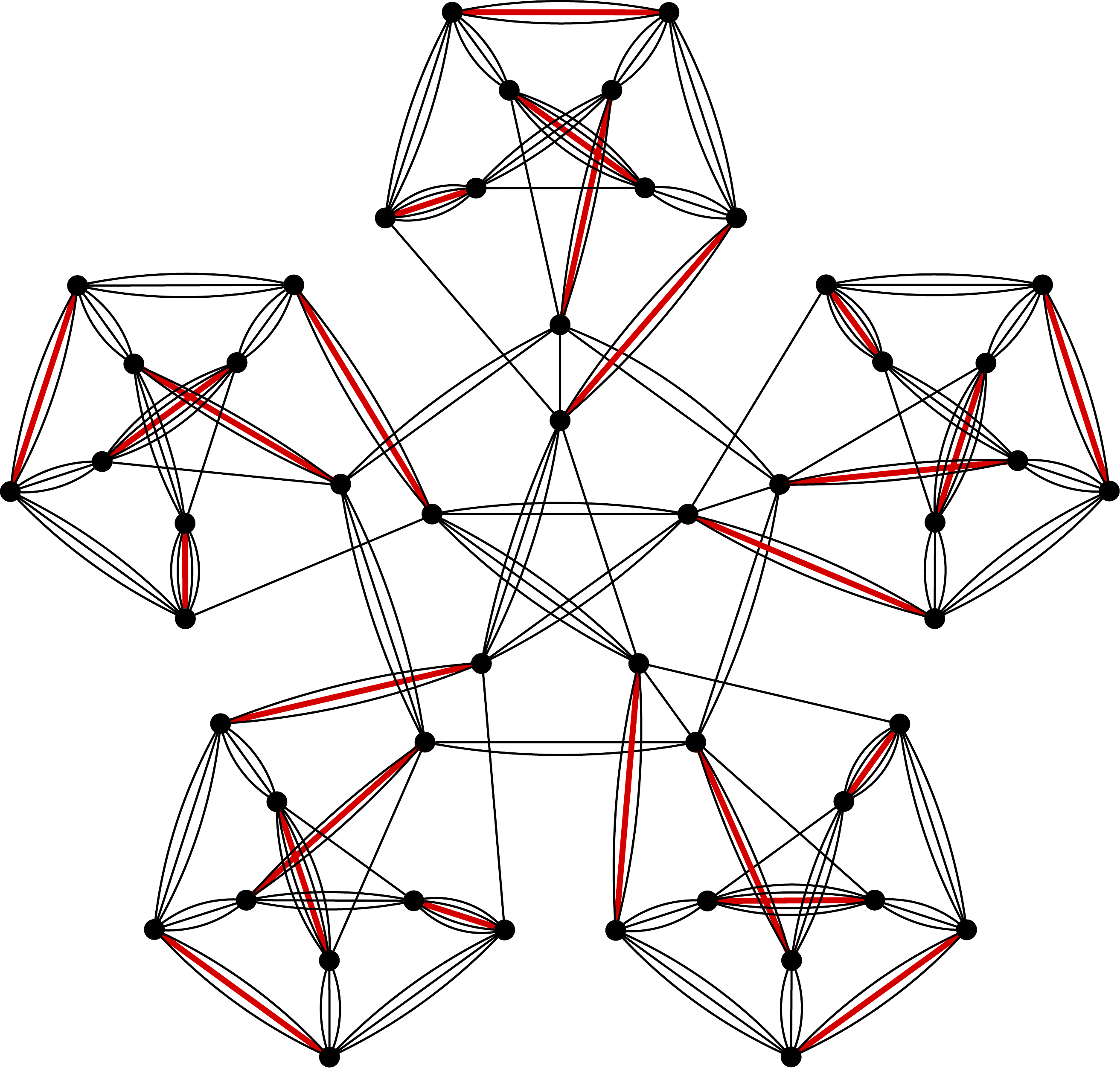}
\end{minipage}
}
\caption{The graph $G=P+2M_0+M_1+M_2+M_3$ (left) and the graph $G'$ (right) constructed from $G$ in the proof of Lemma~\ref{lem:construction}. The edges of $M$ and $M'$ respectively are drawn in bold red lines.}
\label{fig:G_8_G_9}
\end{figure}
	
	An example is given in Figure \ref{fig:G_8_G_9}. We claim that $G'$ and $M'$ have the desired properties.
	
	The perfect matching $M_2$ does not contain the edge $u_1v_1$. Thus, $M'$ is well defined. Furthermore, $M'$ is a perfect matching of $G'$ since $M$ is a perfect matching of $G$. By the definition of $P_{(r+1,l)}$, we have $\mu_{G'}(u,v)\geq l -1$ for every $uv \in M'$. Hence, $M'$ has the desired properties.
	
	The graph $H^0$ is a $2l$-edge-connected $(r+1)$-graph, since $G$ is a $2l$-edge-connected $r$-graph. Furthermore, $\left \lfloor \frac{r-l}{2} \right \rfloor \geq l -2$ since $r \geq 3l-4$. Thus, $r-l+1$ is the maximum number of parallel edges of $P_{(r+1,l)}$ and hence, $\lambda(P_{(r+1,l)})=2l$ by Lemma \ref{Lemma-edge-conn-Peterson}. Therefore, for each $ i\in\{1,\ldots,s\} $, $H^i$ is a $2l$-edge-connected $(r+1)$-graph by Lemma \ref{Lemma-t-splicing-edge-conn}, and so is $ G' $.
	
	Now, suppose that $H^s$ has a $k$-PDPM $\ca N^s$. By applying Lemma~\ref{lem:kSumWithP} with $ t=l $ to the $ (r+1) $-graph $H^s$ and $\ca N^s$ we obtain a $k$-PDPM $\ca N^{s-1}$ of $H^{s-1}$, which avoids $x^sy^s$ by property $(i)$. Apply Lemma~\ref{lem:kSumWithP} to $H^{s-1}$ and $\ca N^{s-1}$ to obtain a $k$-PDPM $\ca N^{s-2}$ of $H^{s-2}$, which avoids $x^{s-1}y^{s-1}$ by property $(i)$ and $x^sy^s$ by property $(ii)$. By inductively repeating this process, we obtain a $k$-PDPM of $H^0$ that avoids every edge of $M$. This is not possible, since $G$ has no  $k$-PDPM. Therefore, $G'$ has no  $k$-PDPM, which completes the proof.
\end{proof}

We note that the condition $r \geq 3l-4$ is necessary in Lemma~\ref{lem:construction} since $\lambda(P_{(r+1,l)})<2l$ if $r<3l-4$.
	In view of Lemma \ref{lem:construction}, we need to construct suitable base graphs for all $l\ge3$, which will be done now.

\subsubsection*{Base graph if $l = 3$.}

Let $l=3$ and let $P_1^1$ and $P_1^2$ be two copies of the graph $P+M_0+M_1+M_2$. For $i\in\{1,2\}$, remove from $P_1^i$ all parallel edges connecting $u_1^iv_1^i$, call this new graph $P_{-}^i$. Let $Q_1$ be the graph constructed by identifying the vertices $u_1^1$ and $u_1^2$ of $P_{-}^1$ and $P_{-}^2$ respectively. If a graph $G$ contains $Q_1$ as a subgraph, then let $E_1^i=E_G(\{v_1^i\}, V(G) \setminus V(Q_1))$ for every $i\in \{1,2\}$, see Figure \ref{fig:Q_1}. 

\begin{figure}[htbp]
	\centering
	\includegraphics[scale=0.4]{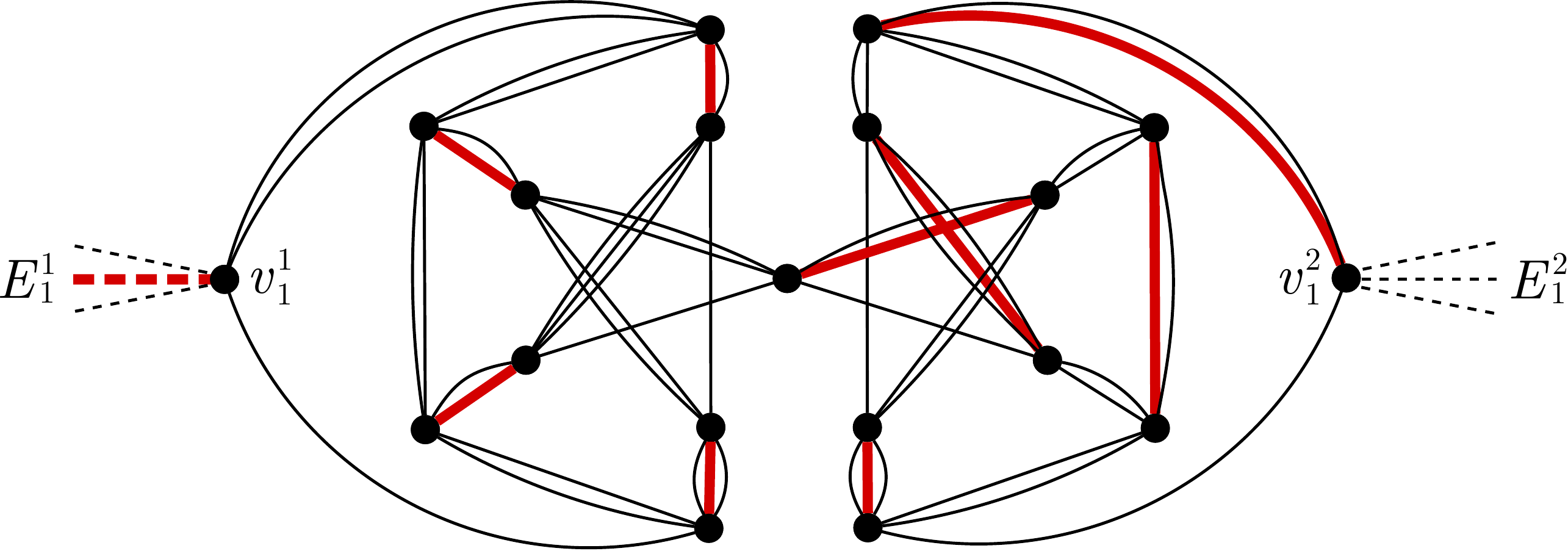}
	\caption{The graph $Q_1$ (solid lines) and the edge sets $E_1^1$, $E_1^2$ (dashed lines). The bold red edges are used to construct $M^6$ in the proof of Theorem~\ref{theo:main_result}.}\label{fig:Q_1}
\end{figure}

Recall the following property of $Q_1$, proved in \cite{MMSW_pdpm}.

\begin{lem}[\cite{MMSW_pdpm}]\label{lem:Q_1}
	Let $G$ be a graph that contains $Q_1$ as an induced subgraph. Let $\{N_1,\dots, N_{4}\}$ be a set of pairwise disjoint perfect matchings of $G$ and let $N= \bigcup_{i=1}^{4} N_i$. If $\partial(V(Q_1)) = E_1^{1} \cup E_1^{2}$, then $\vert E_1^{1} \cap N \vert= \vert E_1^{2} \cap N \vert =2.$
\end{lem}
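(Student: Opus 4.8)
The plan is to combine a parity count over the odd set $V(Q_1)$ with a contraction that turns one of $P_-^1,P_-^2$ into a copy of $P^{\ca M}$, and then to invoke Lemma~\ref{Lemma-no-cover-all-edges}. First observe that every vertex of $Q_1$ has degree $6$ except $v_1^1$ and $v_1^2$, which have degree $3$; since $G$ is $6$-regular this forces $|E_1^1|=|E_1^2|=3$, so $\partial(V(Q_1))=E_1^1\cup E_1^2$ consists of $6$ edges, all incident with $v_1^1$ or $v_1^2$, and no perfect matching of $G$ uses more than one edge of $E_1^i$. As $|V(Q_1)|=19$ is odd, each $N_j$ meets $\partial(V(Q_1))$ in an odd number of edges, hence in exactly one edge; therefore $|E_1^1\cap N|+|E_1^2\cap N|=4$, with each summand at most $3$. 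Applying the same argument to $V(P_-^i)\setminus\{u_1\}$ --- a $9$-element set whose boundary in $G$ is the set of the three edges from $u_1$ into $P_-^i$ together with $E_1^i$ --- the single boundary edge of $N_j$ is either one of the three edges from $u_1$ into $P_-^i$ (so $N_j$ matches $u_1$ into $P_-^i$) or an edge of $E_1^i$ (so $N_j$ matches $v_1^i$ outside $Q_1$, and then matches $u_1$ into the other copy $P_-^{3-i}$). Hence $|E_1^i\cap N|$ equals the number of the $N_j$ that send $u_1$ into $P_-^{3-i}$, and it remains only to exclude the split $\{\,|E_1^1\cap N|,|E_1^2\cap N|\,\}=\{1,3\}$.

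So suppose $|E_1^1\cap N|=3$ and $|E_1^2\cap N|=1$; after relabelling, $N_1,N_2,N_3$ send $u_1$ into $P_-^2$ and $N_4$ sends $u_1$ into $P_-^1$. Being pairwise disjoint, $N_1,N_2,N_3$ together use all three edges from $u_1$ into $P_-^2$; since $P_-^2$ is the subgraph of $P+M_0+M_1+M_2$ obtained by deleting the three copies of $u_1v_1$, these three edges are one copy of $u_1u_2^2$ and two copies of $u_1u_5^2$. Now contract $V(G)\setminus(V(P_-^2)\setminus\{u_1\})$ to a single vertex $w$. The six boundary edges of $V(P_-^2)\setminus\{u_1\}$ --- the three edges from $u_1$ into $P_-^2$ and the three edges of $E_1^2$ --- all become edges at $w$, and because the three $E_1^2$-edges become $w v_1^2$ this reinstates the deleted triple edge $u_1v_1$; thus the contracted graph is exactly $P^{\ca M}$ with $\ca M=\{M_0,M_1,M_2\}$ and $w$ in the role of $u_1$. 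As $V(P_-^2)\setminus\{u_1\}$ has even size, each $N_j$ restricts to a perfect matching $\widehat N_j$ of $P^{\ca M}$, and $\widehat N_1,\dots,\widehat N_4$ are pairwise disjoint.

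It remains to read off the projections to $P$. The edge of $\widehat N_1$ at $w$ is $wu_2^2$, so $(\widehat N_1)_P$ contains $u_1u_2$ and thus lies in $\{M_3,M_5\}$; likewise $(\widehat N_2)_P$ and $(\widehat N_3)_P$ contain $u_1u_5$, so lie in $\{M_2,M_4\}$; and $(\widehat N_4)_P$ contains $u_1v_1$, so lies in $\{M_0,M_1\}$. Writing $\widehat{\ca N}=\{\widehat N_1,\dots,\widehat N_4\}$ and $n(i)$ for the number of copies of $M_i$ in $\widehat{\ca N}_P$, we obtain $n(3)+n(5)=1$ and $n(2)+n(4)=2$, whereas $n_{\ca M}(2)=1$ and $n_{\ca M}(3)=n_{\ca M}(4)=n_{\ca M}(5)=0$. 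Hence some index in $\{3,5\}$ satisfies $n(i)>n_{\ca M}(i)$; and moreover either $n(4)\ge1$, giving the index $4$, or $n(4)=0$ and then $n(2)=2>1$, giving the index $2$. In either case two distinct indices $i$ satisfy $n(i)>n_{\ca M}(i)$, contradicting Lemma~\ref{Lemma-no-cover-all-edges}. Thus $|E_1^1\cap N|=3$ is impossible, and $|E_1^1\cap N|=1$ is excluded in the same way with the roles of $P_-^1$ and $P_-^2$ interchanged. Therefore $|E_1^1\cap N|=|E_1^2\cap N|=2$.

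I expect the delicate part to be the middle paragraph: one has to verify carefully that contracting the complement of $V(P_-^2)\setminus\{u_1\}$ genuinely reconstitutes $P^{\{M_0,M_1,M_2\}}$ with the correct edge multiplicities --- this is exactly where the identity $|E_1^2|=3$ enters --- and that the three edges from $u_1$ into a copy of $P_-$ form the asymmetric pattern of one $u_1u_2$ and two $u_1u_5$ edges. It is precisely this asymmetry that makes the pure parity count inconclusive and forces the appeal to Lemma~\ref{Lemma-no-cover-all-edges}.
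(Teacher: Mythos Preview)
The paper does not give its own proof of this lemma; it is quoted from \cite{MMSW_pdpm} and used as a black box. So there is no ``paper's proof'' to compare against, and your task was really to supply one from scratch.

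Your argument is correct. The parity count over the odd sets $V(Q_1)$ and $V(P_-^i)\setminus\{u_1\}$ cleanly reduces the problem to excluding the $3{+}1$ split, and the contraction to $P^{\{M_0,M_1,M_2\}}$ is exactly the right move; your verification that the three $E_1^2$-edges become the three missing copies of $u_1v_1$ is the point where the construction of $Q_1$ is actually used. One implicit hypothesis you invoke is that $G$ is $6$-regular (to get $|E_1^i|=3$); this is not literally stated in the lemma but is harmless in context, since the lemma is only applied inside $6$-graphs.

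One simplification: once you observe that $N_1,N_2,N_3$ together cover all three edges from $u_1$ into $P_-^2$, you can stop. After contraction these are precisely all edges of $E_{P^{\ca M}}(\{u_2^2,u_5^2\},\{w\})$, and the \emph{second} part of Lemma~\ref{Lemma-no-cover-all-edges} gives the contradiction immediately, with no need to identify the projections $(\widehat N_j)_P$ or to count $n(i)$ versus $n_{\ca M}(i)$. Your route via the first part of the lemma is perfectly valid, just longer.
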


In order to construct the required base graph $G^6$, we need the graph $G_1$ shown in Figure \ref{fig:G_1}, where the boxes denote copies of the graph $Q_1$.

\begin{figure}[htbp]
	\centering
	\includegraphics[scale=0.4]{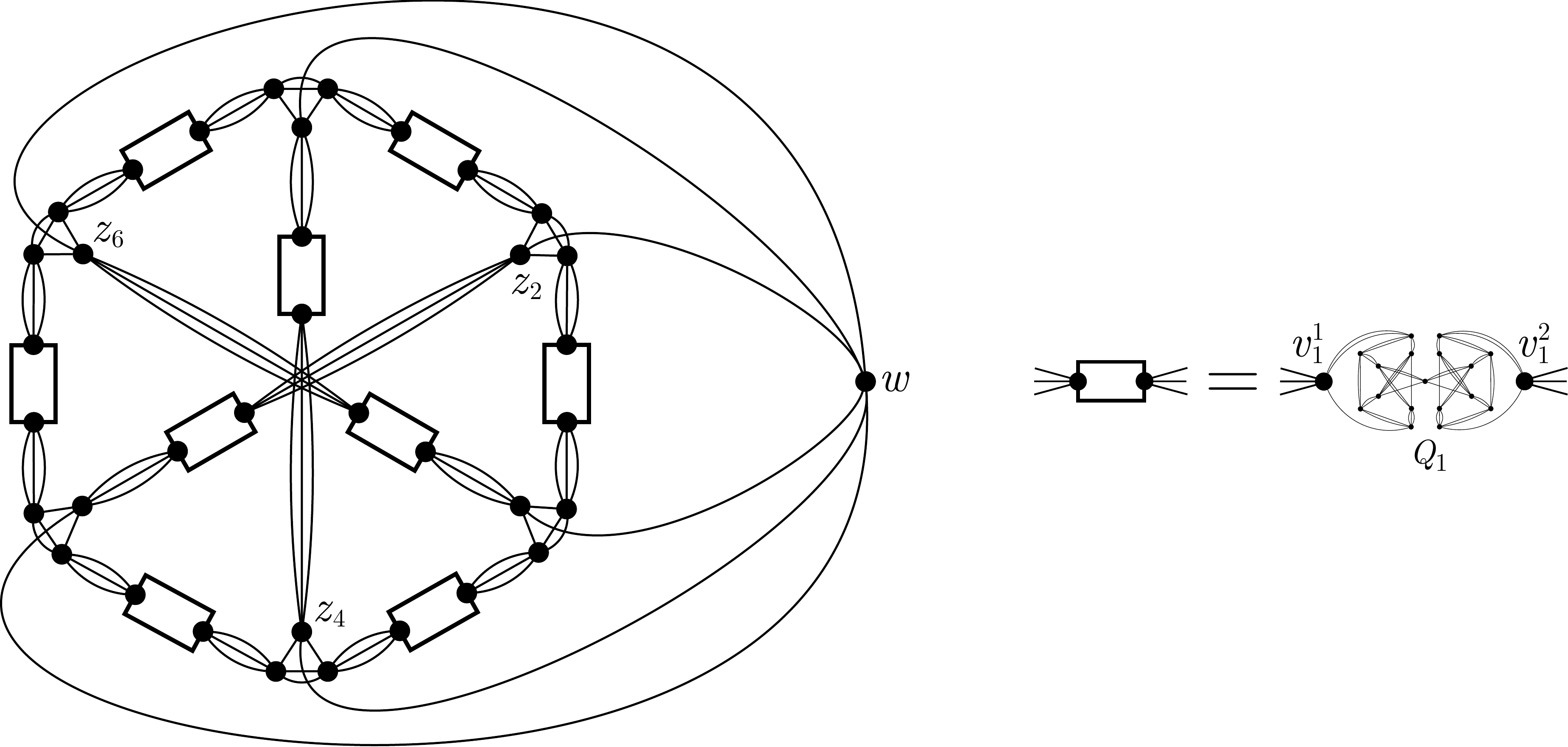}
	\caption{The graph $G_1$.}\label{fig:G_1}
\end{figure}

The graph $G_1$ was constructed in \cite{MMSW_pdpm} to provide a negative answer to a question of Thomassen \cite{THOMASSEN2020343}. The following theorem describes its properties.

\begin{theo}[\cite{MMSW_pdpm}]
	$G_1$ is a $6$-edge-connected $6$-graph without a $4$-PDPM.
\end{theo}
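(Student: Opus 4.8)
The statement is quoted from \cite{MMSW_pdpm}; I sketch the argument one would give, using only the tools assembled above. Two things must be shown: that $G_1$ is a $6$-edge-connected $6$-graph, and that $G_1$ has no $4$-PDPM.

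\emph{Step 1: $G_1$ is a $6$-edge-connected $6$-graph.} I would start from the building block $P+M_0+M_1+M_2$. It is $6$-regular, and Lemma~\ref{Lemma-edge-conn-Peterson} with $k=3$ and $\mu=3$ (the maximum multiplicity, attained at $u_1v_1$) shows it has edge-connectivity $\min\{6,\,2\cdot 3+6-2\cdot 3\}=6$, while the proof of that lemma records that it is a $6$-graph. Forming $Q_1$ deletes the three $u_1v_1$-edges in each of the two copies and identifies $u_1^1$ with $u_1^2$; this lowers only the degrees of $v_1^1$ and $v_1^2$ (to $3$ each), so once $G_1$ provides the sets $E_1^1,E_1^2$ with $|E_1^1|=|E_1^2|=3$ and $\partial(V(Q_1))=E_1^1\cup E_1^2$, every vertex of $G_1$ has degree $6$; summing over all boxes of Figure~\ref{fig:G_1} gives $6$-regularity. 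For the edge-connectivity I would bound $|\partial_{G_1}(S)|$ for an arbitrary non-empty proper $S\subset V(G_1)$, distinguishing whether $S$ lies inside a single copy of $Q_1$ or meets several of them: in the first case one uses the $6$-edge-connectivity of $P+M_0+M_1+M_2$ and the fact that the surgery producing $Q_1$ creates only the balanced $(3,3)$-bottlenecks at $v_1^1$ and $v_1^2$; in the second case each box already contributes its $6$ boundary edges and one invokes the connectivity of the skeleton joining the boxes. Tracking odd sets along the way (each box has $19$ vertices and boundary of size $6$) yields $\lambda_o(G_1)=6$ as well.

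\emph{Step 2: $G_1$ has no $4$-PDPM.} Suppose $\{N_1,N_2,N_3,N_4\}$ is a $4$-PDPM of $G_1$ and put $N=\bigcup_{i=1}^4 N_i$. Since $G_1$ is $6$-regular and the $N_i$ are pairwise disjoint, exactly two edges at each vertex avoid $N$, so $\overline N:=E(G_1)\setminus N$ is a $2$-regular spanning submultigraph, a disjoint union of cycles. Fix a box $Q^{(j)}$ with its boundary sets $E_1^{1,(j)},E_1^{2,(j)}$ and ports $v_1^{1,(j)},v_1^{2,(j)}$. As $|V(Q^{(j)})|=19$ is odd, each $N_i$ meets $\partial(V(Q^{(j)}))$ in an odd number of edges, and these numbers sum to $|N\cap\partial(V(Q^{(j)}))|$ by disjointness; Lemma~\ref{lem:Q_1} gives $|E_1^{1,(j)}\cap N|=|E_1^{2,(j)}\cap N|=2$, hence the sum equals $4$, which forces each $N_i$ to use exactly one boundary edge of $Q^{(j)}$, two of the $N_i$ on each side. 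Thus exactly one edge of $E_1^{1,(j)}$ and one of $E_1^{2,(j)}$ is uncovered, so $\overline N$ meets $\partial(V(Q^{(j)}))$ in exactly two edges, one at $v_1^{1,(j)}$ and one at $v_1^{2,(j)}$, and the restriction of $\overline N$ to $G_1[V(Q^{(j)})]$ is a union of cycles plus a single path with ends $v_1^{1,(j)}$ and $v_1^{2,(j)}$. The last step is to plug these rigid through-strands --- one per box --- into the explicit wiring of $G_1$ in Figure~\ref{fig:G_1} and derive a contradiction from the requirement that the uncovered boundary edges together with the $\overline N$-edges of the skeleton close up into cycles; this is the counting argument carried out in \cite{MMSW_pdpm}.

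\textbf{Main obstacle.} The generic splicing lemmas (Lemmas~\ref{Lemma-t-splicing-edge-conn} and~\ref{lem:kSumWithP}) do not apply verbatim, since $Q_1$ is not a closed $r$-graph but a gadget with two ports; the two structure-dependent points are therefore (a) verifying $6$-edge-connectivity for cuts straddling several copies of $Q_1$, and (b) the final global contradiction in Step~2, both of which need the explicit graph of Figure~\ref{fig:G_1} rather than the black-box lemmas.
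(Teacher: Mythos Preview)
The paper does not give its own proof of this theorem; it is quoted from \cite{MMSW_pdpm} and used as a black box, so there is nothing in the paper to compare your argument against line by line.

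That said, your sketch is sound as far as it goes: the reduction of Step~2 to ``each box has exactly two uncovered boundary edges, one per port'' is correct (four odd numbers summing to $4$ must all be $1$), and you rightly flag that the remaining global contradiction needs the explicit wiring of Figure~\ref{fig:G_1}. One comment worth making is that the paper's own proof of the companion statement for $G^6$ (the Proposition immediately after) suggests a more direct route than your $2$-factor/through-strand analysis: there one picks a single $3$-element set $X$ adjacent to the hub, gets $|\partial(X)\cap N|$ odd from Lemma~\ref{lem:Q_1}, and gets it even from $|X|$ odd and $|\{N_i\}|=4$. The same parity trick, run at the hub $w$ of $G_1$, is almost certainly how \cite{MMSW_pdpm} finishes, and it bypasses the cycle-closure bookkeeping you leave open. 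Your Step~1 outline is reasonable, with the honest caveat you already record: cuts meeting several boxes are not handled by Lemmas~\ref{Lemma-edge-conn-Peterson} or~\ref{Lemma-t-splicing-edge-conn} and require the picture.
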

 Every perfect matching of $G_1$ contains an edge in $ \partial_{G_1}(w) $, which is simple.
 Thus, in order to use Lemma~\ref{lem:construction} we need to slightly modify $G_1$. For any $v\in V(G_1)$, we define a $3$-\emph{expansion} to be the operation that splits $v$ into two vertices $v'$ and $v''$ (edges formerly incident with $v$ will be incident with exactly one of $v'$ and $v''$) and adds three parallel edges between them.

Let $G^6$ be the graph (depicted in Figure \ref{fig:G^6}) obtained from $ G_1 $ by applying a $3$-expansion to the vertices $z_2,z_4,z_6$ and $w$. Let $w'$ and $w''$ be the new vertices in which $w$ has been split. It is straightforward that $G^6$ is still a $6$-edge-connected $6$-graph.

\begin{figure}[htbp]
	\centering
	
	\includegraphics[scale=0.45]{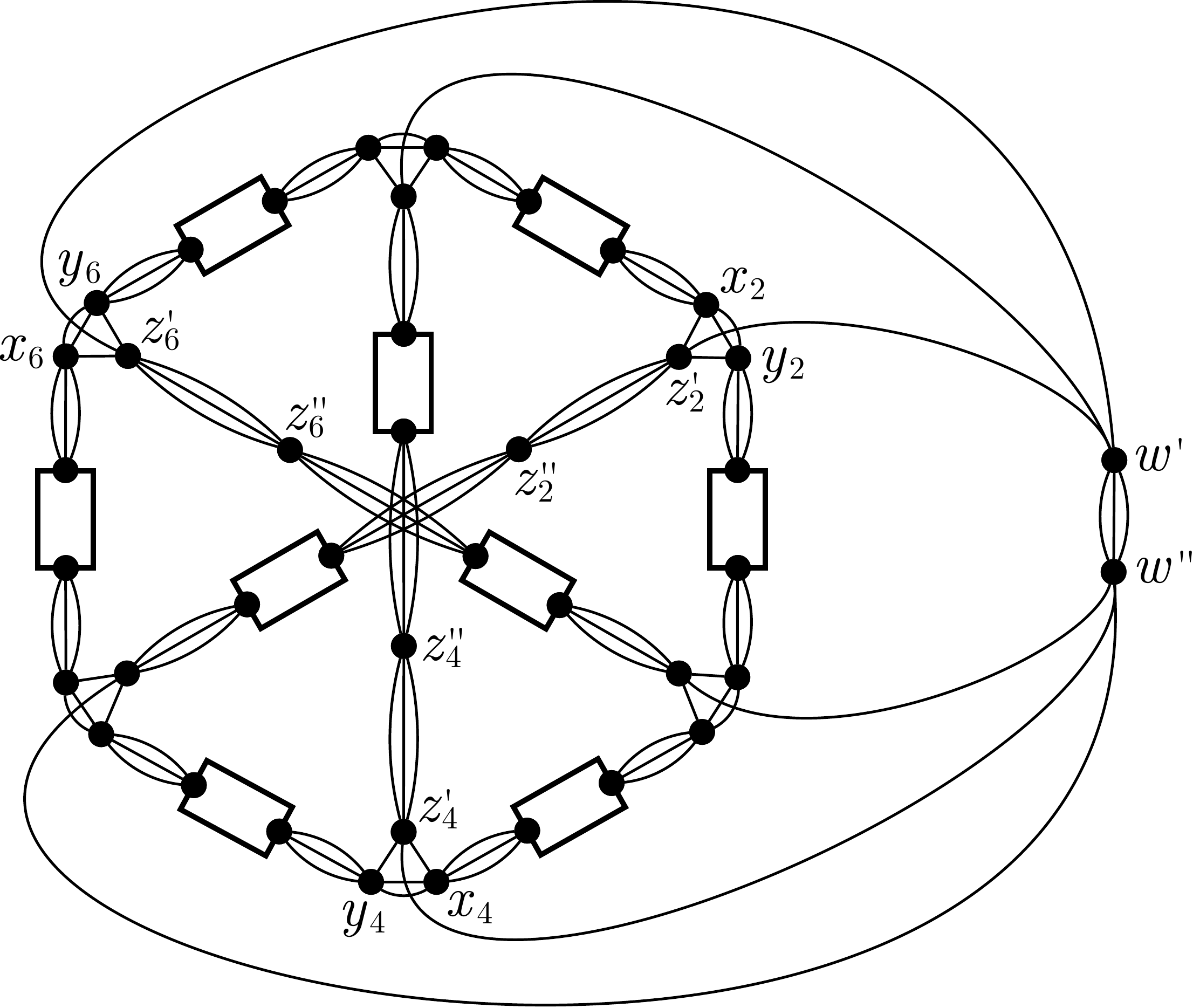}
	\caption{The graph $G^6$.}\label{fig:G^6}
\end{figure}

\begin{prop}
	The graph $G^6$ has no $4$-PDPM.
\end{prop}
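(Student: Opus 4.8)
The plan is to argue by contradiction. Suppose $\mathcal N=\{N_1,N_2,N_3,N_4\}$ is a $4$-PDPM of $G^6$ and put $N=N_1\cup N_2\cup N_3\cup N_4$. The starting observation is that the $3$-expansions defining $G^6$ were applied only to $z_2,z_4,z_6$ and $w$, and none of these vertices belongs to a copy of $Q_1$; hence every box of $G^6$ is still an \emph{induced} copy of $Q_1$ with $\partial(V(Q_1))=E_1^1\cup E_1^2$ (the boundary edge sets are the same as in $G_1$, only with some of their far endpoints relabelled by the expansions). Therefore Lemma~\ref{lem:Q_1} applies to each box, and I record that $|E_1^1\cap N|=|E_1^2\cap N|=2$ for every copy of $Q_1$ in $G^6$.

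Next I would carry out the local analysis at the four expanded pairs. Fix $v\in\{z_2,z_4,z_6,w\}$, let $v',v''$ be the vertices into which $v$ was split, and let $f_1^v,f_2^v,f_3^v$ be the three parallel edges joining them. Each $N_i$ uses exactly one edge at $v'$ and one at $v''$; if the edge at $v'$ is some $f_j^v$ then it is also the edge at $v''$ (call $N_i$ of \emph{type~$A$} at $v$), and otherwise $N_i$ uses an original edge $v'a$ and an original edge $v''b$ with $a\neq b$ (\emph{type~$B$}). Since the $N_i$ are pairwise disjoint, the type-$A$ matchings use distinct $f_j^v$'s, so there are at most three of them, while the type-$B$ matchings use distinct original edges at $v'$ — of which there are only three — so there are at most three of those as well. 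Hence the number $p_v$ of type-$A$ matchings at $v$ satisfies $1\le p_v\le 3$. Moreover a type-$A$ matching $N_i$ uses no original edge at $v'$ or $v''$, so $N_i\setminus\{f_j^v\}$ is a perfect matching of $G^6-\{v',v''\}=G_1-v$, whereas a type-$B$ matching $N_i$ with edges $v'a,v''b$ satisfies that $N_i\setminus\{v'a,v''b\}$ is a perfect matching of $G_1-\{v,a,b\}$.

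It is tempting to try to ``undo'' the expansions and turn $\mathcal N$ into a $4$-PDPM of $G_1$, contradicting the theorem of \cite{MMSW_pdpm} directly; this does not quite work, because a perfect matching of $G^6$ uses $0$ or $2$ original edges at an expanded vertex rather than exactly one, so contracting an expanded pair under- or over-covers that vertex. Instead the plan is to re-run the counting argument that proves $G_1$ has no $4$-PDPM in \cite{MMSW_pdpm}, now on $G^6$: feed the two ingredients above — the ``exactly two edges of $N$ per port'' statement for every $Q_1$-box, and the structural statement ``$1\le p_v\le 3$, with type-$A$ matchings using no original edge at $v$'' for each expanded pair — into the edge count across the cuts that separate the $Q_1$-boxes together with the edges among $z_1,\dots,z_6,w',w''$, and derive the same contradiction.

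The main obstacle is exactly this last step: one must check that the only genuine change caused by the $3$-expansions — that a perfect matching may now bypass all original edges at an expanded vertex by using one of its three parallel edges, equivalently leave the corresponding $G_1$-vertex unmatched to the rest of the graph — does not open a loophole in the original argument. Concretely, I would go through the cases $p_v\in\{1,2,3\}$ at each of $z_2,z_4,z_6,w$, use the facts from the second paragraph to pin down how many edges of $N$ cross each relevant cut in each case, and verify that the resulting system of equalities forced by Lemma~\ref{lem:Q_1} is, just as for $G_1$, not simultaneously satisfiable. This being ruled out, $G^6$ has no $4$-PDPM.
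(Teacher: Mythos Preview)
Your setup is sound: the $3$-expansions touch none of the $Q_1$-boxes, so Lemma~\ref{lem:Q_1} applies to each box in $G^6$ exactly as in $G_1$, and the type-$A$/type-$B$ dichotomy with $1\le p_v\le 3$ at each expanded vertex is correct. But the proposal stops precisely where the content begins: you announce a case analysis over $p_v\in\{1,2,3\}$ at four expanded vertices and a re-run of the counting argument from \cite{MMSW_pdpm}, carry none of it out, and explicitly flag this step as ``the main obstacle.'' As it stands this is a plan, not a proof, and whether the re-run actually closes --- in particular whether the freedom introduced by the type-$A$ matchings really cannot be absorbed --- is left entirely to the reader.

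The paper sidesteps all of this with a one-shot parity argument. From your own observation $p_w\le 3$, some $N_j$ contains an edge $e\in\partial_{G^6}(\{w',w''\})$; by symmetry $e$ is incident with $z_2'$. Now take the \emph{odd} set $X=\{x_2,y_2,z_2'\}$. Since $|X|$ is odd, $|\partial_{G^6}(X)\cap N_i|$ is odd for every $i$, so $|\partial_{G^6}(X)\cap N|$ is even. On the other hand, Lemma~\ref{lem:Q_1} fixes the contribution of the port edge-sets in $\partial_{G^6}(X)$ to $N$ at $2$ each, and the remaining tally comes out odd --- contradiction. The single idea you are missing is to package Lemma~\ref{lem:Q_1} together with the one guaranteed boundary edge $e$ into the cut of a well-chosen odd three-vertex set, rather than tracking all boxes and all expanded vertices simultaneously through a $\{1,2,3\}^4$ case split.
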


\begin{proof}
	In this proof vertex labelings of $G^6$ are considered with reference to Figure \ref{fig:G^6}.
	Assume by contradiction that $G^6$ has a $4$-PDPM $\ca M = \{N_1,\dots,N_4\}$. Then, there is $j\in\{1,\dots,4\}$ such that  $\partial_{G^6}(\{w',w''\})\cap N_j \ne \emptyset.$ Let $e\in \partial_{G^6}(\{w',w''\})\cap N_j$. We can assume without loss of generality that $e$ is incident with $z_2'$. Let $X=\{x_2,y_2,z_2'\}\subseteq V(G^6)$. Then, from Lemma \ref{lem:Q_1}, we infer that $|\partial_{G^6}(X) \cap N|$ is odd, where $N=\cup_{i=1}^{4} N_i$. On the other hand, since $X$ is an odd set, we have that for every $i\in\{i,\dots, 4\}$, $|X\cap N_i|$ is an odd number. Thus, $|X\cap N|= \sum_{i=1}^{4} |X\cap N_i|$ must be an even number, a contradiction.
\end{proof}

\subsubsection*{Base graphs if $l \ge 4$.}

Let $l \ge 4$ and consider the following graph 
\begin{align*}
G^{3 l-4}= P + (l-2)M_0 + (l-3)M_1 + (l-3)M_2 + M_3.
\end{align*}	
The graph $G^8$ is shown in the left-hand side of Figure~\ref{fig:G_8_G_9}. By definition, $G^{3 l-4}$ is a $(3 l-4)$-graph, which is $2l$-edge-connected by Lemma \ref{Lemma-edge-conn-Peterson}. It is well known, see \cite{Grunewald_Steffen_1999}, that $G^{3 l-4}$ is of class 2 and hence has no $(3 l-5)$-PDPM.

\subsubsection*{Proof of Theorem \ref{theo:main_result}.}

   We prove the statement by induction on $r$. When $l \geq 4$ we choose $G^{3 l-4}$ as base graph (defined above) and we consider the perfect matching $M_0$ of $G^{3 l-4}$. 
	
	Recall that $G^{3 l-4}$ is a $2l$-edge-connected $(3 l-4)$-graph with no $(3 l-5)$-PDPM. Furthermore, for all $uv\in M_0$, $\mu_{G^{3l-4}}(u,v)\ge l-1$. Hence the base case is settled.  Then, the inductive step follows by Lemma \ref{lem:construction} and the statement is proved.
	
	When $l=3$, we again argue by induction on $r$. We choose $G^6$ as base graph. We have already proved that it is a $6$-edge-connected $6$-graph without a $4$-PDPM. Hence, $m(6,6)\le3$.
	
	Let $M^6$ be the perfect matching of $G^6$ defined as follows. Consider the matching consisting of the bold red edges depicted in Figure \ref{fig:G^6_pm}. Extend this matching to a perfect matching of $G^6$ by choosing, for every copy of $Q_1$, the bold red edges depicted in Figure \ref{fig:Q_1}. Note that the chosen set of edges is indeed a perfect matching and each edge of such perfect matching has at least one other parallel edge. This means that the condition on the multiplicities of Lemma \ref{lem:construction} is satisfied, i.e.\ for every edge $uv\in M^6$, $\mu_{G^6}(u,v)\ge 2= l - 1$. Therefore the base step is settled. Again, by Lemma \ref{lem:construction}, the inductive step follows. Then Theorem \ref{theo:main_result} is proved.
\ENDproof

\begin{figure}
	\centering
	\includegraphics[scale=0.45]{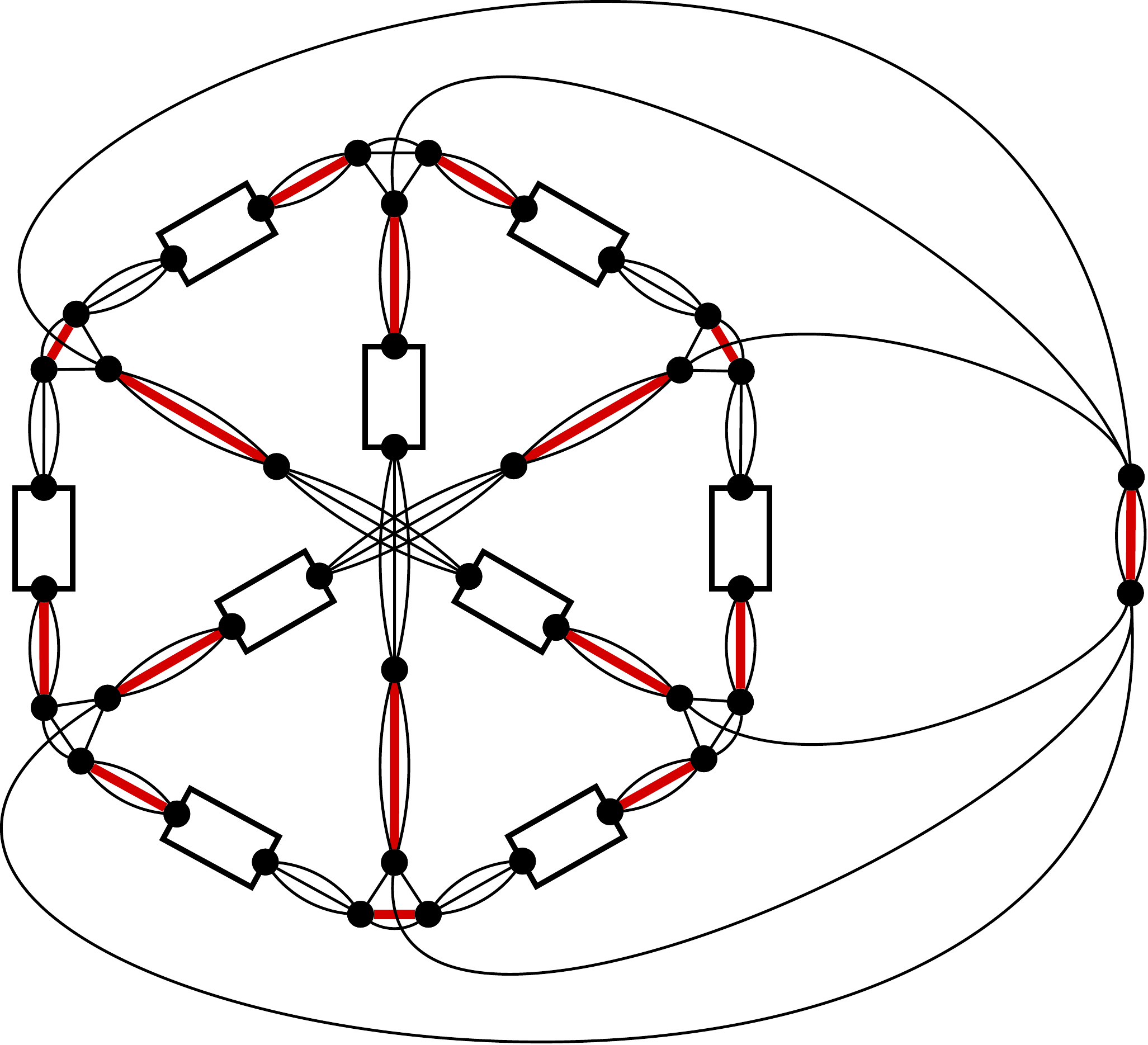}
	\caption{The chosen edges of $G^6$ needed to construct $M^6$.}
	\label{fig:G^6_pm}
\end{figure}

By asking for lower bounds on the parameter $m(t,r)$, one can prove the existence of sets of perfect matchings having specific intersection properties in regular graphs.
For example, it can be proved that for $l \ge 5$, if $m(2l,3l) \ge 2l -1$, then every bridgeless cubic graph admits a perfect matching cover of cardinality $2l -1.$
As another example, it can be proved that, for $l \ge 3$, if $m(2l,3l)\ge l$, then every bridgeless cubic graph has $l$ perfect matchings with empty intersection. Both these proofs rely on the properties of the Petersen graph described in Lemma \ref{Lemma-no-cover-all-edges}.

We though believe that these lower bounds are quite strong conditions. We believe the following statement to be true.

\begin{con}\label{conj:m(2l,r)}
	For all $l\ge 2$ and $r\ge2l$, $m(2l,r)\le l-1$.
\end{con}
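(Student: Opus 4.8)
The plan is to imitate the proof of Theorem~\ref{theo:main_result}. The induction step of Lemma~\ref{lem:construction} is stated for an arbitrary parameter $k$ with $2\le k\le r$, and any base $r$-graph with $r\ge 3l-4$, $\lambda\ge 2l$, and a perfect matching $M$ with $\mu_G(u,v)\ge l-1$ for all $uv\in M$ automatically satisfies $r\ge 2l\ge l\ge k$ once we set $k=l$; hence Lemma~\ref{lem:construction} propagates ``no $l$-PDPM'', together with the other two properties, from such a base $r$-graph to an $(r+1)$-graph. Consequently, Conjecture~\ref{conj:m(2l,r)} reduces entirely to the construction of base graphs: for $l=2$ it is Rizzi's theorem $m(4,r)=1$; for $l\in\{3,4\}$ one needs a single $2l$-edge-connected $(2l)$-graph, carrying a perfect matching with all multiplicities at least $l-1$, that has no $l$-PDPM; and for $l\ge 5$ one needs such a graph for each $r$ in the bounded range $2l\le r\le 3l-4$, after which the induction step covers all $r\ge 3l-4$. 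So the whole difficulty sits in these base cases, and the first genuinely new one is $l=3$: a $6$-edge-connected $6$-graph with no $3$-PDPM. In particular the graphs $G_1$ and $G^6$ used here do not suffice, since they only exclude a $4$-PDPM.

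For the base graphs I would try to amplify the poor matchability of the Petersen graph, whose quantitative shadow is Lemma~\ref{Lemma-no-cover-all-edges}: inside any gadget $P^{\ca M}$ a family of pairwise disjoint perfect matchings can over-use at most one of the six matching types $M_0,\dots,M_5$. Gluing two copies of $P$ (the graph $Q_1$) upgrades this to the rigid count of Lemma~\ref{lem:Q_1} -- a $4$-PDPM meets each boundary bundle $E_1^i$ in exactly two edges -- and chaining several $Q_1$'s, as in $G_1$, turns local parity constraints into a global contradiction via a well-chosen odd vertex set. I would look for a larger gadget of the same flavour: a tree- or cycle-like arrangement of more than two Petersen copies whose inter-gadget connections are thickened to multiplicity roughly $l-1$, so that Lemmas~\ref{Lemma-edge-conn-Peterson} and \ref{Lemma-t-splicing-edge-conn} keep the edge-connectivity equal to $2l$, and designed so that the analogue of Lemma~\ref{lem:Q_1} forces each of $l$ hypothetical pairwise disjoint perfect matchings to occupy a prescribed number of edges on every boundary bundle; a counting argument over an odd vertex set, exactly as in the proof that $G^6$ has no $4$-PDPM, should then yield a parity contradiction. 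The $3$-expansion operation would again be applied at the end to destroy simple edges so that the multiplicity hypothesis of Lemma~\ref{lem:construction} is satisfied, and the splicing operation $\oplus_t$ is the tool for assembling the pieces while controlling $\lambda$.

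The main obstacle is precisely the construction of these base gadgets, and I expect it to be hard rather than routine. The target bound $l-1$ lies close to the edge of what is believed possible: as noted just after the conjecture, already $m(2l,3l)\ge l$ would force every bridgeless cubic graph to have $l$ perfect matchings with empty intersection, and $m(2l,3l)\ge 2l-1$ would force a perfect-matching cover of size $2l-1$ -- statements in the orbit of the Fan--Raspaud and Berge--Fulkerson conjectures. So a proof of Conjecture~\ref{conj:m(2l,r)} cannot be soft: it must exhibit explicit, highly edge-connected $r$-graphs whose perfect-matching structure is tightly controlled. The Petersen-based machinery developed in this paper (Lemmas~\ref{Lemma-no-cover-all-edges}, \ref{Lemma-edge-conn-Peterson}, \ref{Lemma-t-splicing-edge-conn}, \ref{lem:kSumWithP}, together with $\oplus_t$ and the $3$-expansion) is the natural engine, but producing a base family that drives the PDPM count all the way down to $l-1$ appears to require a new combinatorial idea for the gadget. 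A sensible first step would be to settle $l=3$ completely, and then try to extend the construction in $l$.
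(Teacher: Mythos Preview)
The statement is a \emph{conjecture} in the paper, and the paper offers no proof of it; the only remark following it is that the case $l=2$ is settled by Rizzi's result $m(4,r)=1$. So there is no argument in the paper for you to be compared against.

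Your proposal is not a proof either, and you are explicit about this: you correctly observe that Lemma~\ref{lem:construction} applies verbatim with the parameter $k$ set to $l$ (the hypothesis $2\le k\le r$ is then $2\le l\le r$, which holds), so that the whole conjecture reduces to producing base graphs in the finite window where the induction cannot yet start, namely $r=2l$ for $l\in\{3,4\}$ and $2l\le r\le 3l-4$ for $l\ge 5$. That reduction is sound, and your identification of the first genuinely new case --- a $6$-edge-connected $6$-graph with no $3$-PDPM --- is exactly right; the paper's graph $G^6$ only excludes a $4$-PDPM and therefore does not suffice. The heuristic you sketch (larger Petersen-based gadgets, parity arguments over odd vertex sets, $3$-expansions to fix multiplicities, $\oplus_t$ to control $\lambda$) is in the spirit of the paper's methods, but, as you say yourself, no such gadget is actually constructed. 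In short: your analysis of what would be required is accurate, the paper agrees that the problem is open, and the missing piece you name --- the base graphs --- is precisely the content of the conjecture.
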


Note that when $l=2$, Conjecture \ref{conj:m(2l,r)} is true by Rizzi \cite{rizzi1999indecomposable}.

\section{Acknowledgments}

Major parts of this work were carried out during a stay of Davide Mattiolo at Paderborn University, supported by the Heinrich Hertz-Stiftung.

\bibliography{Lit_reg_graphs}{}
\addcontentsline{toc}{section}{References}
\bibliographystyle{plain}

\end{document}